\newcommand{\Q}{\ensuremath{\mathbb{Q}}}
\newcommand{\x}{\ensuremath{\mathbf{x}}}
\DeclareMathOperator{\Spec}{Spec}
\DeclareMathOperator{\Hom}{Hom}
\newtheorem{Thm}{Theorem}[section]
\newtheorem{Cor}[Thm]{Corollary}
\newtheorem{Lem}[Thm]{Lemma}
\newtheorem{Prop}[Thm]{Proposition}
\theoremstyle{definition}
       \newtheorem{defi}[Thm]{Definition}
       \newtheorem{Rmk}[Thm]{Remark}
       \newtheorem{ex}[Thm]{Example}
\numberwithin{equation}{section}
\tikzstyle{mutable}=[inner sep=0.5mm,circle,draw,minimum size=2mm]
\tikzstyle{frozen}=[inner sep=.9mm,rectangle,draw]
\tikzstyle{dot} = [fill=black!25,inner sep=0.5mm,circle,draw,minimum size=1mm]
\tikzstyle{marked}=[inner sep=0.5mm,circle,draw,blue!75!black,fill=blue!50]
\title{Singularities of Locally Acyclic Cluster Algebras}
\author{Ang\'elica Benito, Greg Muller, Jenna Rajchgot, and Karen E. Smith}
 \email{abenitos@umich.edu, morilac@umich.edu, rajchgot@umich.edu,  kesmith@umich.edu}
\thanks{The last author  acknowledges the financial support of NSF grant DMS-1001764 and the Clay Foundation. The first author is partially support by MTM2012-35849}
\address{Department of Mathematics
 University of Michigan\\
 Ann Arbor, MI 48109}
\begin{document}
\maketitle

\begin{abstract}
We show that locally acyclic cluster algebras have (at worst) canonical singularities. In fact, we prove that locally acyclic cluster algebras of positive characteristic are strongly $F$-regular. In addition, we show that upper cluster algebras are always Frobenius split by a canonically defined splitting, and that they have a free canonical module of rank one. We also give examples to show that not all upper cluster algebras are F-regular if the local acyclicity is dropped.
\end{abstract}

\section{Introduction}
Fomin and Zelevinsky introduced cluster algebras at the close of the twentieth century as a way to study total positivity in a wide range of contexts. Since then, cluster algebra structures have been discovered in many unexpected corners of mathematics (and physics), including Teichm\"uller theory \cite{GSV05,FG07}, discrete integrable systems \cite{FZ03y}, knot theory \cite{MulSk,MSW12}, and mirror symmetry \cite{GS14,KS13}, just to name a few.

Locally acyclic cluster algebras, introduced recently by the second author \cite{MulLA},  are a large class of cluster algebras which are simultaneously flexible enough to include many interesting examples --- including many fundamental examples from representation theory and most examples from Teichm\"uller theory --- yet restrictive enough to avoid the pathological behavior sometimes found in general cluster algebras. For example, locally acyclic cluster algebras are finitely generated and normal, while a general cluster algebra may fail to be either.
The main theorem of this paper is that {\it locally acyclic cluster algebras have (at worst) canonical singularities.} In fact, we show that locally acyclic cluster algebras of prime characteristic are {\it strongly $F$-regular,}  a strong form of  Frobenius split  which  implies many nice restrictions on the singularities; for example, $F$-regular varieties are normal, Cohen-Macaulay, pseudorational, and  have Kawamata log terminal singularities (if the canonical class divisor is $\Bbb Q$-Cartier) or canonical singularities (if the canonical class is Cartier). These characteristic $p$ results imply the corresponding statements in characteristic zero as well.  For  survey, see  e.g. \cite{Smi97b} or \cite{SmithMSRIsurvey}. 
 
 Associated to a cluster algebra $\mathcal{A}$ is its \emph{upper cluster algebra}  $\mathcal{U}$. These related algebras have the same fraction field and satisfy $\mathcal{A} \subseteq \mathcal{U}$  (cf. \cite{BFZ05}). 
 We show that all upper cluster algebras in positive characteristic have a `cluster' Frobenius splitting, which can be expressed explicitly in terms of any cluster. We also prove the closely related result that upper cluster algebras have a free canonical module, which is generated by any `log volume form' in a cluster of cluster variables. The latter of these results is found in the appendix.

The inclusion $\mathcal{A} \subseteq \mathcal{U}$ need not be equality, though it is in the case when $\mathcal{A}$ is locally acyclic \cite{MulAU}. When equality fails, a general philosophy is that $\mathcal{U}$ is better behaved than $\mathcal{A}$. In this direction, we show that if an upper cluster algebra $\mathcal{U}$ fails to be $F$-regular, then $\mathcal{A}$ also fails to be $F$-regular, and we provide an example of this situation. Taking the ground field to be of characteristic zero, this gives an example of a finitely generated upper cluster algebra $\mathcal U$ which has {\it log canonical  but not log terminal singularities.} 
 We also provide an example where $\mathcal{A}\neq \mathcal{U}$ and $\mathcal{A}$ is pathological (e.g. $\mathcal{A}$ is non-Noetherian), but $\mathcal{U}$ is nevertheless strongly $F$-regular.

All of our results and arguments are also valid for cluster algebras given by an arbitrary skew-symmetrizable matrix. However, we have written the exposition in the slightly less general setting of cluster algebras given by quivers for the sake of accessibility. Experts will have no trouble adapting the arguments to the more general setting. 


\subsection*{Acknowledgements} We are grateful to MRSI for financial support and for organizing the workshop in August 2012 designed to foster interactions between commutative algebra and cluster algebras, where this collaboration was born.

We are also indebted to Allen Knutson and David Speyer, who graciously shared their unpublished work on Frobenius splittings of cluster algebras.

\section{Cluster algebras}

\def\ZP{\mathbb{ZP}}
\def\x{\mathbf{x}}
\def\y{\mathbf{y}}
\def\B{\mathsf{B}}
\def\ZZ{\mathbb{Z}}
\def\A{\mathcal{A}}
\def\U{\mathcal{U}}
\def\k{\mathsf{k}}
\def\Q{\mathsf{Q}}
\def\k{\mathsf{k}}


A cluster algebra is a commutative domain with some extra combinatorial structure. It  comes equipped with a (usually infinite) set of generators called \emph{cluster variables}, which can be recursively generated from a \emph{seed}: a quiver decorated with a free generating set for a field.

We will consider cluster algebras over an arbitrary field $\k$, although in the literature they are usually defined over $\Bbb Q$, $\Bbb R$ or $\Bbb Z$.  The choice of scalars is mostly irrelevant to the definitions, and most proofs of standard results go through without change.  As such, we will cite the original results without comment, and only address the differences  as needed.

\subsection{Seeds and mutations}

Let $\k$ be a field, and let $\mathcal{F}$ be a purely transcendental finite extension of $\k$.
A \textbf{seed}  for $\mathcal{F}$ over $\k$ consists of the following data.
\begin{itemize}
	\item A \textbf{quiver} $\Q$ without loops or directed 2-cycles.
	\item A  
	 bijection from the vertices of $\Q$ to a  set of algebraically independent generators $\x=\{x_1,x_2,...,x_n\}$ for $\mathcal{F}$ over $\k$.  The image $x_i$ of a vertex $i$ is called the \textbf{cluster variable} at that vertex, and the set $\x$ is called a \textbf{cluster}.
	\item A subset of the vertices of $\Q$ designated as \textbf{frozen}; the rest are called \textbf{mutable}.  We impose the non-standard convention that every vertex which touches no arrow is frozen.\footnote{This convention allows us to define cluster algebras in characteristic two, and otherwise produces the same definition as the ``usual" convention in every other characteristic. The point is that this convention  prevents the numerator in the mutation formula \eqref{eqmut} from being 2, which in characteristic two would mean that a mutation at that vertex  would {\it not produce} another valid cluster variable.}
\end{itemize}

Seeds will usually be denoted as a pair $(\Q,\x)$, with the other data suppressed.  The number of vertices of $\Q$ (denoted $n$ hereafter) is the \textbf{rank} of the seed, and the number of mutable vertices (denoted $m$ hereafter) is the \textbf{mutable rank}.

Seeds may be drawn as a quiver with circles \begin{tikzpicture}
\node[mutable] at (0,0) {};
\end{tikzpicture} for mutable vertices, and rectangles \begin{tikzpicture}
\node[frozen] at (0,0) {};
\end{tikzpicture} for frozen vertices,  each with the corresponding cluster variable inscribed (e.g. Figure \ref{fig: exampleseed}).

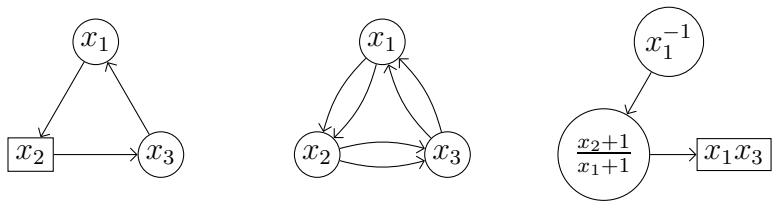
\begin{figure}[h!t]
	\begin{tikzpicture}
	\begin{scope}[xshift=0in]
		\node[mutable] (x1) at (90:1) {$x_1$};
		\node[frozen,inner sep=1mm] (x2) at (210:1) {$x_2$};
		\node[mutable] (x3) at (-30:1) {$x_3$};
		\draw[-angle 90] (x1) to (x2);
		\draw[-angle 90] (x2) to (x3);
		\draw[-angle 90] (x3) to (x1);
	\end{scope}
	\begin{scope}[xshift=1.5in]
		\node[mutable] (x1) at (90:1) {$x_1$};
		\node[mutable] (x2) at (210:1) {$x_2$};
		\node[mutable] (x3) at (-30:1) {$x_3$};
		\draw[-angle 90,relative, out=15,in=165] (x1) to (x2);
		\draw[-angle 90,relative, out=-15,in=-165] (x1) to (x2);
		\draw[-angle 90,relative, out=15,in=165] (x2) to (x3);
		\draw[-angle 90,relative, out=-15,in=-165] (x2) to (x3);
		\draw[-angle 90,relative, out=15,in=165] (x3) to (x1);
		\draw[-angle 90,relative, out=-15,in=-165] (x3) to (x1);
	\end{scope}
	\begin{scope}[xshift=3in]
		\node[mutable] (x1) at (90:1) {$x_1^{-1}$};
		\node[mutable,inner sep=1mm] (x2) at (210:1) {$\frac{x_2+1}{x_1+1}$};
		\node[frozen] (x3) at (-30:1) {$x_1x_3$};
		\draw[-angle 90] (x1) to (x2);
		\draw[-angle 90] (x2) to (x3);
	\end{scope}
	\end{tikzpicture}
	\caption{Three examples of seeds in $\mathcal{F}=\k(x_1,x_2,x_3)$.}
	\label{fig: exampleseed}
\end{figure}

A seed $(\Q,\x)$ may be \textbf{mutated} at any mutable vertex $k$ to produce a new seed $(\mu_k(\Q),\mu_k(\x))$ for $\mathcal F$.  The quiver $\mu_k(\Q)$ is constructed in three steps.

\noindent \begin{minipage}[l]{3.2in}
	\vspace{.1in}
	\begin{enumerate}
		\item For each pair of arrows $i\rightarrow k \rightarrow j$ through the vertex being mutated, add an arrow $i\rightarrow j$.
		\item Reverse the orientation of every arrow incident to $k$.
		\item Cancel any directed 2-cycles in pairs.
	\end{enumerate}
\end{minipage}
\hspace{.3in}
\begin{minipage}[l]{1.5in}
	\begin{tikzpicture}
	\node at (-.1in,.05in) {$\Q$};
	\begin{scope}[xshift=.25in,yshift=0in,scale=.5]
		\node[dot, fill=white] (x1) at (90:1) {};
		\node[dot, fill=white] (x2) at (210:1) {};
		\node[dot, fill=white] (x3) at (-30:1) {};
		\node[above] at (x1) {$k$};
		\draw[-angle 90] (x1) to (x2);
		\draw[-angle 90] (x2) to (x3);
		\draw[-angle 90] (x3) to (x1);
	\end{scope}
	\draw[dashed, -angle 90,relative, out=15,in=165] (.5in,.05in) to node[above] {(1)} (1.0in,-.05in);
	\begin{scope}[xshift=1.25in,yshift=-.15in,scale=.5]
		\node[dot, fill=white] (x1) at (90:1) {};
		\node[dot, fill=white] (x2) at (210:1) {};
		\node[dot, fill=white] (x3) at (-30:1) {};
		\node[above] at (x1) {$k$};
		\draw[-angle 90] (x1) to (x2);
		\draw[-angle 90] (x3) to (x1);
		\draw[-angle 90,relative, out=15,in=165] (x2) to (x3);
		\draw[angle 90-,relative, out=-30,in=-150] (x2) to (x3);
	\end{scope}
	\draw[dashed, -angle 90,relative, out=-15,in=195] (1.0in,-.15in) to node[below right] {(2)} (.25in,-.6in);
	\begin{scope}[xshift=0in,yshift=-.75in,scale=.5]
		\node[dot, fill=white] (x1) at (90:1) {};
		\node[dot, fill=white] (x2) at (210:1) {};
		\node[dot, fill=white] (x3) at (-30:1) {};
		\node[above] at (x1) {$k$};
		\draw[-angle 90] (x2) to (x1);
		\draw[-angle 90] (x1) to (x3);
		\draw[-angle 90,relative, out=15,in=165] (x2) to (x3);
		\draw[angle 90-,relative, out=-30,in=-150] (x2) to (x3);
	\end{scope}
	\draw[dashed, -angle 90,relative, out=15,in=165] (.25in,-.7in) to node[below] {(3)} (.75in,-.8in);
	\begin{scope}[xshift=1.0in,yshift=-.9in,scale=.5]
		\node[dot, fill=white] (x1) at (90:1) {};
		\node[dot, fill=white] (x2) at (210:1) {};
		\node[dot, fill=white] (x3) at (-30:1) {};
		\node[above] at (x1) {$k$};
		\draw[-angle 90] (x2) to (x1);
		\draw[-angle 90] (x1) to (x3);
	\end{scope}
	\node at (1.5in,-.85in) {$\mu_k(Q)$};
	\end{tikzpicture}
\end{minipage}

The cluster variables in $\mu_k(\mathbf{x})$ are the same as those in $\mathbf{x}$, except for the cluster variable at vertex $k$, which becomes
\begin{equation}\label{eqmut}
 x_k' := \left(\displaystyle \prod_{i\rightarrow k} x_i +\prod_{k\rightarrow i} x_i\right) x_k^{-1},
\end{equation}
where the products are over all arrows into or out of $k$, respectively.  Frozen vertices stay frozen.

Mutating at the same vertex twice in a row returns  the original seed. That is, mutation is an involution on the set of seeds of $\mathcal F/\k$. Two seeds are \textbf{mutation-equivalent} if they are related by a sequence of mutations.

\subsection{Cluster algebras}  Fix a seed $(\Q,\x)$ for $\mathcal F$ over $\k$. The union of all the clusters which appear in mutation-equivalent seeds defines the complete set of \textbf{cluster variables} in the ambient field $\mathcal{F}$, naturally grouped into overlapping {\bf clusters} consisting of those appearing together in a seed.  The \textbf{cluster algebra} $\A(\Q,\x)$ determined by $(\Q,\x)$ is the sub-$\k$-algebra of $\mathcal{F}$ generated by all of the cluster variables and the inverses of the frozen variables.    The cluster algebra only depends on the mutation-equivalence class of the initial seed, and so the initial seed $(\Q,\x)$ will often be omitted from the notation.

A fundamental property of cluster algebras is the \emph{Laurent phenomenon} \cite{FZ02}, which states that each cluster variable can be expressed as a Laurent polynomial in {\it any} cluster. Put differently, 
the localization of $\A$ at any cluster $\x=\{x_1,x_2,...,x_n\}$ is the ring of Laurent polynomials in $\x$ over $\k$:
\[ \A\hookrightarrow \A[x_1^{-1},x_2^{-1},...,x_n^{-1}] = \k[x_1^{\pm1},x_2^{\pm1},...,x_n^{\pm1}] \subset \mathcal{F}.\]
Every cluster in $\A$ defines such an inclusion.  This naturally leads to the following definition: the \textbf{upper cluster algebra} $\U$ of $\A$ is the intersection of each of these Laurent rings, taken inside the ambient field $\mathcal{F}$.
\[\U:= \bigcap_{\text{clusters } \x\subset \A} \k[x_1^{\pm1},x_2^{\pm1},...,x_n^{\pm1}]\subset \mathcal{F}.\]
By the Laurent phenomenon, there is an inclusion $\A\subseteq \U$.  This inclusion is not always equality (see \cite[Proposition 1.26]{BFZ05}), but it is an equality in all the simplest examples, and in many of the most important examples.

\begin{Lem}
Upper cluster algebras are normal.
\end{Lem}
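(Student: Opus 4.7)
The plan is to exploit the defining description
\[\U \;=\; \bigcap_{\text{clusters } \x} \k[x_1^{\pm 1}, x_2^{\pm 1}, \ldots, x_n^{\pm 1}] \;\subset\; \mathcal{F},\]
and to reduce the claim to the two elementary facts that (a) each Laurent polynomial ring in the intersection is normal, and (b) an intersection of normal subrings of a field, all sharing the same fraction field, is again normal.

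For (a), a Laurent polynomial ring $\k[x_1^{\pm 1}, \ldots, x_n^{\pm 1}]$ is a localization of the polynomial ring $\k[x_1,\ldots,x_n]$; polynomial rings are UFDs and UFDs are normal, and both properties are preserved under localization. Hence each term in the intersection is a normal domain whose fraction field is exactly $\mathcal{F}$, since the $x_i$ form a transcendence basis for $\mathcal{F}$ over $\k$.

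For (b), note that $\U$ itself has fraction field $\mathcal{F}$, because $\U$ contains every cluster variable and hence contains a transcendence basis of $\mathcal{F}$. Suppose $\alpha \in \mathcal{F}$ is integral over $\U$. Then $\alpha$ satisfies a monic polynomial whose coefficients lie in $\U$, and \emph{a fortiori} in every Laurent ring $\k[x_1^{\pm 1}, \ldots, x_n^{\pm 1}]$ appearing in the intersection. Since each such Laurent ring is normal with fraction field $\mathcal{F}$, this forces $\alpha$ to lie in every one of them, and therefore $\alpha \in \U$. So $\U$ is integrally closed in its fraction field, i.e.\ normal.

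There is no serious obstacle: the entire argument is a two-line verification once the defining intersection is written down. The only small subtlety to mention is that the statement requires $\U$ to be a domain, which is automatic since it sits inside the field $\mathcal{F}$.
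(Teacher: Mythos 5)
Your proof is correct and follows essentially the same route as the paper: each Laurent ring in the defining intersection is normal (the paper cites regularity, you cite the UFD/localization argument), and an intersection of normal subrings of the common field $\mathcal{F}$ is normal, which you simply spell out rather than cite. No further comment is needed.
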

\begin{proof}
Laurent rings over fields are regular, and hence normal.  Since an intersection of normal rings inside their common fraction field is normal, upper cluster algebras are normal.
\end{proof}

\subsection{Cluster localization.}

Under certain circumstances, localizing a cluster algebra at one or more cluster variables is again a cluster algebra.  This important idea is first discussed in  \cite{MulLA} and further developed in \cite{MulAU},  to which we refer for more details. 

Given a seed $(\Q,\x)$ over $\k$ and a designated subset $\{k_1,k_2,...,k_a\}$ of its mutable vertices, we can make a new seed $(\Q^\dagger,\x^\dagger)$ by making those vertices frozen.
Because mutations for $\Q^{\dagger}$ are all mutations for $\Q$, 
there is a natural containment
\begin{equation}\label{clusterLoc}
 \A(\Q^\dagger,\x^\dagger) \subseteq \A(\Q,\x)[x_{k_1}^{-1},x_{k_2}^{-1},\dots ,x_{k_a}^{-1}] \end{equation}
If this is an equality, $\A(\Q^\dagger,\x^\dagger)$ is called a \textbf{cluster localization} of $\A(\Q,\x)$.

Although it can be difficult to determine whether a particular 
localization is  a cluster localization,  there is one situation where it is easy.  Indeed, we have inclusions
\begin{equation}\label{clusterLocinclusion}
 \A(\Q^\dagger,\x^\dagger) \subseteq \A(\Q,\x)[x_{k_1}^{-1},x_{k_2}^{-1},\dots ,x_{k_a}^{-1}] 
\subseteq \U(\Q, \x)[x_{k_1}^{-1},x_{k_2}^{-1},\dots ,x_{k_a}^{-1}] \subseteq \U(\Q^{\dagger}, \x^{\dagger}),
 \end{equation}
where the first and third inclusion follow from the fact that the mutations for $\Q^{\dagger}$ are a subset of the mutations for $\Q$ (and the middle inclusion follows from the Laurent phenomenon  for $\A(\Q)).$
 Thus the inclusion in (\ref{clusterLoc}) is always equality whenever $\mathcal{A}(\Q^\dagger, \x^\dagger)=\U(\Q^\dagger, \x^\dagger)$. One extreme case is where we freeze all vertices: since obviously, $\A = \U$ when no mutations can happen, it follows that localizing at any full cluster $\x$
is a cluster localization. More generally,  $\mathcal{A}(\Q^\dagger,\x^\dagger)=\U(\Q^\dagger,\x^\dagger)$  will necessarily hold if `enough' mutable vertices become frozen. 

For example, if we freeze enough variables to break any directed cycles in $\Q$, we arrive at an
{\bf acyclic} quiver $\Q^{\dagger}$. By definition, a quiver is acyclic if it  has no directed cycles through mutable vertices;  a {\bf cluster algebra is acyclic} if it admits some acyclic seed.  Because acyclic cluster algebras are known to equal their upper cluster algebras (by Theorem \ref{AcyclicPres} below), the chain of inclusions (\ref{clusterLocinclusion})  above implies that $\A(\Q^{\dagger}, \x^{\dagger})$ is a cluster localization whenever     $\Q^{\dagger}$ is  acyclic.

\subsection{Cluster Covers}
The idea of cluster localization is powerful when a cluster algebra can be covered by cluster localizations. \begin{defi}
For a cluster algebra $\A$, a set $\{\A_i\}_{i\in I}$ of cluster localizations of $\A$ is called a \textbf{cluster cover} if the corresponding open subschemes cover $\Spec(\A)$,  that is, if
\[ \Spec(\A) = \bigcup_{i\in I} \Spec(\A_i). \]
\end{defi}
If a cluster algebra admits a cluster cover, 
any ``geometric" property, such as normality,  smoothness, or even different classes of singularities,
can be checked locally on the cluster localizations. 
Another  property which may be checked on a cover is whether $\A=\U$:
\begin{Lem}  \cite[Lemma 3.3.2]{MulAU}
If $\{\A_i\}_{i\in I}$ is a cluster cover of  $\A$, and $\A_i=\U_i$ for each $i\in I$, then $\A=\U$. 
\end{Lem}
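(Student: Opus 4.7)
The plan is to verify the nontrivial inclusion $\U \subseteq \A$, since $\A \subseteq \U$ is automatic from the Laurent phenomenon. I would do this in two conceptually independent steps: first relate $\U$ to the local pieces $\U_i$, and then use the covering condition to assemble the local data.

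For the first step, I claim that $\U \subseteq \U_i$ for every $i \in I$. This is essentially free from the chain (\ref{clusterLocinclusion}) already used in the paper: since $\A_i = \A(\Q^\dagger, \x^\dagger)$ arises by declaring a subset of the mutable vertices of $\Q$ to be frozen, every mutation sequence valid for $\Q^\dagger$ is also valid for $\Q$, so the clusters appearing in $\A_i$ form a subset of those appearing in $\A$ (after inverting the freshly frozen variables). The intersection defining $\U_i$ therefore runs over fewer Laurent rings than the one defining $\U$, and each of these Laurent rings is one of the Laurent rings cut out in forming $\U$. Hence $\U \subseteq \U_i$, and by hypothesis $\U_i = \A_i$, so $\U \subseteq \A_i$ inside $\mathcal{F}$ for every $i$. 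Intersecting over $i$ gives
\[\U \;\subseteq\; \bigcap_{i \in I} \A_i \quad \text{in } \mathcal{F}.\]

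For the second step, I would show that $\A = \bigcap_{i \in I} \A_i$. Each $\A_i$ is by definition the localization $\A[f_i^{-1}]$, where $f_i$ is the product of the cluster variables one inverts to pass from $\A$ to $\A_i$, so $\Spec(\A_i) = D(f_i)$. The cluster cover hypothesis $\Spec(\A) = \bigcup_i D(f_i)$ is equivalent to saying that no prime of $\A$ contains every $f_i$, i.e.\ the ideal generated by $\{f_i\}_{i\in I}$ is the unit ideal of $\A$. Given $a \in \bigcap_i \A[f_i^{-1}]$, consider the conductor ideal $J = \{c \in \A \mid ca \in \A\}$. For each $i$ some power of $f_i$ lies in $J$, so $J$ is contained in no prime of $\A$, hence $J = \A$ and $a \in \A$. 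Combined with the first step this yields $\U \subseteq \A$, completing the proof.

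The argument is almost entirely bookkeeping; the only real content beyond the definitions is the standard affine covering lemma used in the last step, and even this is straightforward once one notes that cluster localizations are principal localizations. The mild subtlety is that $\A$ need not be Noetherian in general, so one must use the conductor-ideal argument directly rather than appealing to a sheaf-cohomology statement, but the argument above handles this uniformly.
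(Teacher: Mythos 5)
Your proof is correct: the inclusion $\U\subseteq\U_i=\A_i$ does follow from the chain (\ref{clusterLocinclusion}) applied to the relevant seed, and the conductor-ideal step is sound even without Noetherian hypotheses, since an ideal of $\A$ contained in no prime must be the unit ideal and each cluster localization is a principal localization $\A[f_i^{-1}]$, so some power of each $f_i$ lies in the conductor. The paper does not reprove this lemma but cites Muller's Lemma 3.3.2, and your argument --- reduce to $\U\subseteq\bigcap_i\A_i$ and then glue over the cover via a partition-of-unity/conductor argument --- is essentially the same standard proof as in that reference.
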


A powerful observation proposed in  \cite{MulLA} is that many notable classes of cluster algebras admit covers by \emph{acyclic cluster algebras}.
\begin{defi}
A cluster algebra is \textbf{locally acyclic} if it admits a cluster cover by acyclic cluster algebras.
\end{defi}

The class of locally acyclic cluster algebras is {\it much wider} than the class of 
acyclic cluster algebras. The latter class is well-understood and very nicely behaved, but far too restrictive to be itself a major class. 
On the other hand, {\it locally} acyclic cluster algebras include, for example, cluster algebras of Grassmannians,  cluster algebras of \emph{marked surfaces} with at least two marked points on the boundary \cite[Theorem 10.6]{MulLA}, as well as  cluster algebras of double Bruhat cells and more generally, \emph{positroid cells} \cite{MS14}.
Because the geometric properties of  locally acyclic cluster algebras follow nicely from the acyclic case, there is now substantial interest in identifying locally acyclic cluster algebras.

\begin{Prop}[\cite{MulLA}]\label{propLA}
A locally acyclic cluster algebra over $\k$ is finitely generated over $\k$ and equal to its  upper cluster algebra.  A locally acyclic cluster algebra is normal and a local complete intersection (hence Gorenstein). 
\end{Prop}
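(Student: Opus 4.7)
The plan is to deduce each of the four claims (equality $\A = \U$, normality, finite generation, and the local complete intersection property) by reducing it to the corresponding property of the acyclic cluster algebras appearing in a cluster cover $\{\A_i\}_{i\in I}$ of $\A$. This matches the general philosophy laid out in the preceding subsection: geometric/local properties on $\Spec(\A)$ can be checked on the open cover $\{\Spec(\A_i)\}$.

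The equality $\A=\U$ and normality should come first and nearly for free. By Theorem~\ref{AcyclicPres}, every acyclic cluster algebra $\A_i$ coincides with its upper cluster algebra $\U_i$, and the cluster-cover lemma (Lemma~2.x) then forces $\A=\U$. Normality of $\A$ is then immediate from the already-established fact that upper cluster algebras are normal, being intersections of regular Laurent rings inside a common fraction field.

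For finite generation, I would first use the quasi-compactness of $\Spec(\A)$ to pass to a finite subcover, so that we may assume $I$ is finite. Each acyclic $\A_i$ is finitely generated over $\k$ (acyclic cluster algebras admit an explicit finite presentation at any acyclic seed; see \cite{MulLA}). Each $\A_i$ is, moreover, a localization of $\A$ at some product of cluster variables $f_i$, so $\A_i = \A[f_i^{-1}]$, and the open sets $D(f_i)$ cover $\Spec(\A)$, i.e.\ the $f_i$ generate the unit ideal. A standard commutative-algebra fact then produces finite $\k$-algebra generators of $\A$ by clearing denominators in the generators of each $\A[f_i^{-1}]$ and adjoining elements witnessing $1 \in (f_1,\ldots,f_n)$.

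For the complete intersection property, the key is again the acyclic case: at an acyclic seed, the mutations $x_k' x_k - (\prod_{i\to k} x_i + \prod_{k\to i} x_i)$ at the mutable vertices form a regular sequence of the right length, realizing $\A_i$ as a complete intersection in a Laurent polynomial ring, hence as a local complete intersection (see \cite{MulLA}). Since the l.c.i.\ property is local on $\Spec$, the cover $\Spec(\A) = \bigcup \Spec(\A_i)$ transports it to $\A$, and Gorenstein is automatic from l.c.i. The main obstacle is thus not in the reduction itself — which is formal once the cluster cover is in place — but in knowing the acyclic case, which is the substantive input imported from \cite{MulLA}; everything else is a routine local-to-global argument with a finite affine cover.
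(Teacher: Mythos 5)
Your argument is correct and is essentially the paper's intended proof: the paper simply cites \cite{MulLA} and remarks that everything follows from the acyclic case (Theorem~\ref{AcyclicPres}) via the cluster-cover machinery, which is exactly the local-to-global reduction you carry out ($\A=\U$ from the cover lemma, normality from normality of upper cluster algebras, finite generation from a finite subcover of basic opens $D(f_i)$ with each $\A[f_i^{-1}]$ finitely generated, and l.c.i.\ checked locally). No gaps; the only substantive input is the acyclic case, which both you and the paper import from the literature.
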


This follows with little fuss from the acyclic case, due to Berenstein, Fomin, and Zelevinsky.\footnote{The proof in \cite{BFZ05} assumes an additional condition, that the cluster algebra is `totally coprime'.  However, it was shown in \cite{MulAU} that this condition is unnecessary.}
\begin{Thm}[Corollary 1.17 \cite{BFZ05}, Corollary 4.2.2 \cite{MulAU}]\label{AcyclicPres}
Let $(\Q,\x)$ be an acyclic seed.  Then the cluster algebra $\A(\Q)$ is a finitely generated complete intersection, equal to its upper cluster algebra  $ \U(\Q).$
\end{Thm}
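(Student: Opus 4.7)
The plan is to work with the \emph{lower bound algebra} $\mathcal{L}$, defined as the $\k$-subalgebra of $\mathcal{F}$ generated by the initial cluster $\x=\{x_1,\ldots,x_n\}$, the once-mutated variables $x_k':=\mu_k(x_k)$ for each mutable vertex $k$, and the inverses of the frozen variables. Since each $x_k'$ is a cluster variable and all frozen variables are inverted in $\A(\Q)$, we obtain the sandwich $\mathcal{L}\subseteq \A(\Q)\subseteq \U(\Q)$, and the goal is to force all three to coincide while presenting $\mathcal{L}$ as a complete intersection.

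First I would exhibit $\mathcal{L}$ as an explicit complete intersection. For each mutable vertex $k$ the exchange relation reads $x_k x_k' = M_k^+ + M_k^-$, where $M_k^{\pm}$ are the monomials in initial cluster variables read off from the arrows at $k$. Because $\Q$ is acyclic, I may order the mutable vertices $1,\ldots,m$ so that every arrow between mutable vertices goes from smaller to larger index; under this ordering, each $M_k^{\pm}$ involves only initial cluster variables and none of the $x_j'$. Consequently the $m$ binomials $x_k x_k' - M_k^+ - M_k^-$ are linear in the $x_k'$ with non-zero-divisor leading coefficients $x_k$, and one verifies they form a regular sequence in the localized polynomial ring $R:=\k[x_1,\ldots,x_n,x_1',\ldots,x_m'][\{x_f^{-1}\}_{f\text{ frozen}}]$. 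Hence $\mathcal{L}=R/(x_k x_k'-M_k^+-M_k^-)_{k=1}^m$ is a complete intersection of Krull dimension $n$, and in particular finitely generated and Gorenstein.

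Second I would establish the key equality $\mathcal{L}=\U(\Q)$. Using the exchange relations one produces a standard monomial spanning set for $\mathcal{L}$: rewriting $x_k x_k'$ as $M_k^+ + M_k^-$, every element of $\mathcal{L}$ is a $\k$-linear combination of monomials in which no pair $\{x_k,x_k'\}$ occurs simultaneously. Now let $f\in \U(\Q)$. By definition $f$ admits a Laurent expansion $f=P(\x)/x_1^{a_1}\cdots x_n^{a_n}$ in the initial cluster, and also in each neighboring cluster $\mu_k(\x)$. The plan is, for each mutable $k$ in the chosen order, to use the identity $x_k^{-1}=x_k'/(M_k^+ + M_k^-)$ together with the Laurent expansion of $f$ in $\mu_k(\x)$ to absorb negative powers of $x_k$ into polynomial expressions in $x_k'$. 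The acyclic ordering guarantees that this rewriting at vertex $k$ does not create new negative powers of $x_j$ for $j<k$ and does not mix with any $x_j'$ for $j\ne k$, so the procedure terminates after one pass over the mutable vertices and expresses $f$ as an element of $\mathcal{L}$.

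Combined with the sandwich $\mathcal{L}\subseteq \A\subseteq \U$, this yields $\A(\Q)=\mathcal{L}=\U(\Q)$, a finitely generated complete intersection. The main obstacle is clearly the inclusion $\U\subseteq\mathcal{L}$: this is where acyclicity is genuinely used, and it depends on two technical points — the coprimality of $x_k$ and $M_k^+ + M_k^-$ in $\k[\x,\{x_f^{-1}\}]$ (so that negative powers of $x_k$ can be detected and removed cleanly via the expansion in $\mu_k(\x)$), and the fact that the acyclic order makes the rewriting steps at different vertices independent. Without acyclicity each $M_k^{\pm}$ could involve some $x_j'$, the relations would no longer form a regular sequence in the above manner, and the inductive rewriting argument would fail.
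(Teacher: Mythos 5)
The paper does not prove this statement itself: it is quoted from \cite{BFZ05} (with the coprimality hypothesis removed in \cite{MulAU}), so your proposal is in effect a reconstruction of the argument in those references, and indeed your overall strategy is theirs: sandwich the lower bound algebra $\mathcal{L}\subseteq\A(\Q)\subseteq\U(\Q)$, present $\mathcal{L}$ by the exchange binomials, and prove $\U\subseteq\mathcal{L}$ using only the Laurent conditions coming from the initial cluster and its one-step mutations (a sound idea, since $\U$ is contained in that finite intersection; this is also why no ``totally coprime'' hypothesis is needed). However, as written there are two genuine gaps. First, in the presentation step: knowing that the $m$ binomials $x_kx_k'-M_k^+-M_k^-$ form a regular sequence in $R$ gives a complete intersection $R/I$ of the right dimension, but it does not show that the kernel of $R\twoheadrightarrow\mathcal{L}$ equals $I$; a priori $\mathcal{L}$ could be a proper quotient of $R/I$ (equivalently, $R/I$ could fail to be a domain). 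One must either prove that the standard monomials (those avoiding the pairs $\{x_k,x_k'\}$) are \emph{linearly independent} in $\mathcal{F}$ --- this is exactly where acyclicity enters in \cite{BFZ05} --- or show that each $x_i$ is a nonzerodivisor on $R/I$ so that $R/I$ embeds into the Laurent ring. Your spanning-set remark addresses only half of this.

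Second, and more seriously, the inclusion $\U\subseteq\mathcal{L}$ is not established by the ``one pass'' rewriting you describe. Substituting $x_k^{-1}=x_k'/(M_k^++M_k^-)$ replaces a monomial denominator by the non-monomial denominator $(M_k^++M_k^-)^{a_k}$, and membership in $\mathcal{L}$ then requires a divisibility statement for the numerator that does not follow merely from $f$ being Laurent in $\mu_k(\x)$; what Laurentness in $\x$ and $\mu_k(\x)$ gives is (roughly) that $f$ lies in $\k[x_k,x_k'][x_j^{\pm1}:j\neq k]$, and the real difficulty is combining these conditions over \emph{all} mutable $k$ simultaneously. The claim that the acyclic order makes the rewriting steps at different vertices independent is precisely the crux, and it is asserted rather than proved; note in particular that $M_k^{\pm}$ does involve the variables $x_j$ with $j<k$, so clearing the denominator at vertex $k$ is not obviously harmless for the earlier vertices. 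The known proofs handle this by induction on the number of mutable vertices --- in \cite{BFZ05} via the standard-monomial basis of the upper bound, and in \cite{MulAU} by freezing a source or sink and using a cluster localization (the same induction pattern used in Theorem~\ref{AFreg} of this paper) --- and some such inductive mechanism needs to replace your single rewriting pass before the argument is complete.
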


\begin{Rmk}
It is important to note that not every cluster algebra admits a cover by proper cluster localizations. For example, the Markov cluster algebra generated from the middle seed in Figure 1 can not by covered by proper cluster localizations. Indeed, one easily checks that $\A$ can be $\Bbb N$-graded, with every cluster variable homogeneous of degree one. So, any non-trivial cluster localization $ \Spec(\A_i)$ of  $ \Spec(\A)$ necessary misses the unique homogeneous maximal ideal generated by the cluster variables.
\end{Rmk}

\section{Frobenius splittings}


\subsection{Frobenius splittings}

Every domain{\footnote{The assumption that $R$ is a domain is completely unnecessary, but it simplifies our discussion and is sufficient for our purposes.}} $R$ over a field of positive characteristic $p$ has a canonical ring map, the \textbf{Frobenius endomorphism}
$$F: R \rightarrow R, \,\,\, {\rm defined \,\,by} \,\,\, x \mapsto x^p.
$$
 The Frobenius map is  an $R$-module map if we equip the target copy of $R$ with the $R$-module 
structure it gets via  restriction of scalars. In practice, it is convenient to denote the target copy of $R$ by some other notation. We  denote  the target copy by $R^{1/p}$ and its elements by $r^{1/p}$, which is consistent with viewing the target copy of $R$ as (the canonically isomorphic ring) $R^{1/p}$ inside the algebraic closure of the fraction field of $R$. In this case, the elements of $r$ act on elements $x^{1/p} \in R^{1/p}$ by $r\cdot x^{1/p}=(r^px)^{1/p},$  the usual multiplication $rx^{1/p}$ in the fraction field.
In this notation, the Frobenius map becomes the inclusion  
\begin{equation}\label{eq: Frobeniusmodule}
\xymatrix@C=0.9cm@R=0cm{
 R\ \ar@{^(->}[r]^{F} & R^{1/p}\\
 r\ar@{|->}[r] &(r^p)^{1/p} = r\!\!\!\!\!\!\!\!\!\!\!\!\!\!\!
 }
\end{equation}
of $R$ into the overring $R^{1/p}$ of $p$-th roots.

We say that $R$ is {\bf $F$-finite}  if $R^{1/p}$ is a finitely generated $R$-module. This is a fairly weak condition, satisfied for example, by every finitely generated algebra over a perfect field $\k$.

A famous theorem of Kunz states that an $F$-finite domain $R$  is regular  if and only if $R^{1/p}$ is locally free over $R$ (\cite[Theorem 2.1]{Kun69}).  More generally, one should expect that the closer $R^{1/p}$ is to being locally free over $R$, the milder the singularities of $R$. Frobenius split rings and strongly $F$-regular rings are examples of rings in which some degree of ``freeness" is retained of $R^{1/p}$ over $R$.

\begin{defi}
A domain $R$ is \textbf{Frobenius split} if the map \eqref{eq: Frobeniusmodule} splits in the category of $R$-modules.  
A choice of splitting $\phi:R^{1/p}\rightarrow R$ is called a \emph{Frobenius splitting}.
\end{defi}

\begin{ex}
 Every field $\k$ of characteristic $p$ is  Frobenius split, since $\k^{1/p}$ is a vector space over the subfield $\k$.  
  For a perfect field $\k$, the Frobenius endomorphism is a field isomorphism, and its inverse is   the unique Frobenius splitting of $\k$.  
\end{ex}


\begin{ex} Polynomial rings are Frobenius split. Define the \textbf{standard splitting} of the polynomial ring $\k[x_1,x_2, ...,x_n]$ to be 
 given by
\[ \phi:(\k[x_1, x_2,...,x_n])^{1/p} \longrightarrow \k[x_1, x_2, ...,x_n], \]
\[ \phi\left(( \lambda x_1^{a_1}x_2^{a_2}\cdots x_n^{a_n})^{1/p}\right) = \left\{ \begin{array}{cc}
\phi(\lambda^{1/p})x_1^{a_1/p}x_2^{a_2/p}\cdots x_n^{a_n/p} & \text{if } a_1,a_2,...,a_n\in p\ZZ \\
0 & \text{otherwise} \\
\end{array} \right\},\]
where, the map 
$\phi: \k^{1/p} \rightarrow \k$ on scalars $\lambda$ is taken to be any fixed splitting of Frobenius.
\end{ex} 

\begin{Rmk}
The standard splitting of a polynomial ring is a Frobenius splitting, and will be the source of Frobenius splittings of cluster algebras. 
It depends on a choice of generators $\x$, and if $\k$ is not perfect, it depends on a choice of Frobenius splitting for $\k$. We suppress the dependence on the choice of a Frobenius splitting of $\k$ by assuming our ground field comes with a fixed Frobenius splitting. In any case, when $\k$ is perfect, there is a unique splitting.
\end{Rmk}

The standard splitting of a polynomial ring induces a splitting, also called the \textbf{standard splitting},  of the $\k$-Laurent ring $L = \k[x_1^{\pm1}, \dots, x_n^{\pm1}]$, using the exact same formula as above. 
An isomorphism between two $\k$-Laurent rings will commute with the standard splitting, so it does not depend on a choice of presentation. 
 
The standard splitting of a $\k$-Laurent ring has the following universal property.

\begin{Lem}\label{lemma: uniLaurent}
If $L$ is a finitely-generated Laurent ring over a perfect{\footnote{Perfect is not necessary here but it suffices for our purposes and simplifies the discussion. }}
 field $\k$ of characteristic $p$, then the standard splitting $\phi$ freely generates $\Hom_L(L^{1/p},L)$ as an $L^{1/p}$-module.
\end{Lem}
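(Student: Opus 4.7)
The plan is to use that, because $\k$ is perfect, $L^{1/p}$ can be identified with the Laurent ring $\k[y_1^{\pm 1},\ldots,y_n^{\pm 1}]$ where $y_i = x_i^{1/p}$, so that the inclusion $L\hookrightarrow L^{1/p}$ corresponds to $x_i\mapsto y_i^p$. In particular, $L^{1/p}$ is a free $L$-module with basis $\{y^\alpha : \alpha\in A\}$, where $A:=\{0,1,\ldots,p-1\}^n$. In this language, the standard splitting $\phi\colon L^{1/p}\to L$ is just the $L$-linear projection onto the $y^0$-summand: it sends a monomial $y^\gamma$ to itself when $\gamma\in p\Z^n$ (so that $y^\gamma=x^{\gamma/p}\in L$) and to $0$ otherwise.

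First I would show that $\phi$ generates $\Hom_L(L^{1/p},L)$ as an $L^{1/p}$-module. The key observation is that for $\alpha,\beta\in A$ one has
\[ (y^{-\alpha}\cdot \phi)(y^\beta) \; = \; \phi(y^{\beta-\alpha}), \]
and since $\beta-\alpha$ lies in $\{-(p-1),\ldots,p-1\}^n$, the only way for it to belong to $p\Z^n$ is $\alpha=\beta$. Thus $\{y^{-\alpha}\cdot\phi\}_{\alpha\in A}$ is the $L$-basis of $\Hom_L(L^{1/p},L)$ dual to $\{y^\alpha\}_{\alpha\in A}$. In particular these $p^n$ elements span $\Hom_L(L^{1/p},L)$ over $L$, so $\phi$ alone generates it over $L^{1/p}$.

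Next I would verify that the generation is free, i.e.\ that the map $L^{1/p}\to \Hom_L(L^{1/p},L)$, $s\mapsto s\cdot\phi$, is injective. Writing $s=\sum_\gamma s_\gamma y^\gamma$ as a finite sum with $s_\gamma\in\k$, the value on a basis element is
\[ (s\cdot\phi)(y^\alpha) \; = \; \sum_{\gamma\,:\,\gamma+\alpha\in p\Z^n} s_\gamma\, x^{(\gamma+\alpha)/p} \;\in\; L. \]
As $\gamma$ ranges over $-\alpha+p\Z^n$, the exponents $(\gamma+\alpha)/p$ range bijectively over $\Z^n$, yielding $\k$-linearly independent monomials in $L$. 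Hence vanishing on each $y^\alpha$ with $\alpha\in A$ forces all $s_\gamma$ to vanish, one residue class mod $p$ at a time, so $s=0$.

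The main obstacle is essentially bookkeeping: one must keep straight the two compatible descriptions of $L^{1/p}$ — as a Laurent ring in its own right over $\k$, and as a rank $p^n$ free $L$-module — and remember that the $L^{1/p}$-action on $\Hom_L(L^{1/p},L)$ is by precomposition with multiplication. Once this dictionary is in place, both the generation and the injectivity reduce to transparent computations tracking monomial exponents modulo $p$, with the perfectness of $\k$ used only to absorb $p$-th roots of scalars into $\k$.
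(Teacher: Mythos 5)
Your proof is correct and follows essentially the same route as the paper: both rest on the decomposition of $L^{1/p}$ as a free $L$-module on the monomials $(\x^{\mathbf a})^{1/p}$, $0\le a_i<p$, and on the fact that $\phi$ annihilates exactly the monomials whose exponents are not in $p\Z^n$; your identification of $\{y^{-\alpha}\cdot\phi\}_{\alpha}$ as the dual basis is just a repackaging of the paper's explicit formula $s=\sum_{\mathbf a}\psi(\x^{\mathbf a})^p\x^{-\mathbf a}$. The only difference is that you spell out the injectivity of $s\mapsto s\cdot\phi$ (the ``freely'' part), which the paper leaves implicit.
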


Explicitly, every $L$-module map $L^{1/p}\rightarrow L$ (including every Frobenius splitting) can be written as the composition
$$
L^{1/p} \overset{m_s}\longrightarrow  L^{1/p} \overset{\phi}\longrightarrow L
$$
 of the standard splitting $\phi$ and ``multiplication by $s^{1/p}$"  map $m_s$ for some {\it unique} $s \in L$.  We denote this composition by $\phi \circ s^{1/p}$.

\begin{proof} 
Let $L=\k[x_1^{\pm1},x_2^{\pm1},...,x_n^{\pm1}]$.
As an $L$-module, $L^{1/p}$ has a basis consisting of monomials $\mathbf{x}^{\mathbf{a}} =x_1^{a_1}x_2^{a_2}\cdots x_n^{a_n}$ for which $0\leq a_i < p$.  For any $\psi\in \Hom_L(L^{1/p},L)$, define
\[ s:=\!\!\!\! \sum_{{\mathbf{a}}\, |\, {0\leq a_i <p}}\!\! \psi(\mathbf{x}^\mathbf{a})^p \mathbf{x}^{-\mathbf{a}} .\]
Then, for any $\mathbf{b}$ with $0\leq b_i <p$, 
\[ \phi( (s \x^\mathbf{b})^{1/p}) = \phi\left( \left(\sum \psi(\mathbf{x}^\mathbf{a})^p\mathbf{x}^{\mathbf{b}-\mathbf{a}}\right)^{1/p} \right)
 = \sum\left( \psi(\mathbf{x}^\mathbf{a}) \phi \left( \mathbf{x}^{\mathbf{b}-\mathbf{a}}\right)^{1/p} \right) =\psi(\mathbf{x}^\mathbf{b}).\]
Since $\phi\circ s^{1/p}$ and $\psi$ agree on a basis for $L^{1/p}$, they coincide.
\end{proof}

\begin{Rmk}\label{gorcan}
In fact, for any Gorenstein $F$-finite ring $S$ of characteristic $p$, the module $\Hom_S(S^{1/p},S)$  is a locally free rank one  $S^{1/p}$-module, since in this case, $\Hom_S(S^{1/p},S)$ is a canonical module for $S^{1/p}$. 
The point here for Laurent rings is that a Frobenius splitting gives a canonical generator. 

One special case is for a field.
 If $\mathcal F$ is a field, then $\Hom_\mathcal F(\mathcal F^{1/p},\mathcal F)$ is a one dimensional vector space over $\mathcal F^{1/p}$, so we can take any non-zero mapping to be a basis.  In particular, if we fix a splitting $\phi:  \mathcal F^{1/p} \rightarrow \mathcal F$,
then  every  
 $\psi:  \mathcal F^{1/p} \rightarrow \mathcal F$  is  the composition 
 $\psi = \,\,\phi \circ s^{1/p}\,\,$ for some {\it unique} $s \in \mathcal F$. 
\end{Rmk}

\subsection{Frobenius splittings of upper cluster algebras}\label{StspU}
As we now prove, upper cluster algebras are {\it always} Frobenius split. Indeed, there is a natural {\bf cluster splitting} which is compatible with the cluster structure: 

\begin{Thm}\label{thm:UisSplit} Let  $\U$ be an upper cluster algebra over a  field $\k$ of positive characteristic.  For any cluster $\x=\{x_1,x_2,...,x_n\}$, the standard splitting of $\k[x_1^{\pm1},x_2^{\pm1},...,x_n^{\pm1}]$ restricts to a splitting of $\U$.  This splitting of $\U$ does not depend on the choice of cluster.
\end{Thm}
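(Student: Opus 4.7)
The plan is to show, for any two clusters $\x$ and $\x'=\mu_k(\x)$ that differ by a single mutation, that the standard splittings $\phi_\x$ and $\phi_{\x'}$ of the two Laurent rings $L_\x:=\k[x_1^{\pm1},\ldots,x_n^{\pm1}]$ and $L_{\x'}$ agree on $(L_\x\cap L_{\x'})^{1/p}$ and send it into $L_\x\cap L_{\x'}$. Since $\U\subseteq L_\x\cap L_{\x'}$, this gives agreement on $\U^{1/p}$ for mutation-adjacent clusters. Both statements of the theorem then follow at once: given any cluster $\x''$, a sequence of mutations $\x=\x_0,\x_1,\ldots,\x_r=\x''$ produces a chain of equalities $\phi_\x(u^{1/p})=\phi_{\x_1}(u^{1/p})=\cdots=\phi_{\x_r}(u^{1/p})$ for $u\in\U$, so the common value lies in every $L_{\x''}$, hence in $\bigcap_{\x''}L_{\x''}=\U$; moreover it depends only on $u$, not on the choice of $\x$. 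That the restricted map is a Frobenius splitting of $\U$ is automatic, since $\phi_\x$ is already a splitting of the larger ring $L_\x\supseteq\U$.

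The first concrete step is to describe $L_\x\cap L_{\x'}$ explicitly. Setting $P:=x_kx_k'=M+M'\in\k[x_i^{\pm1}:i\neq k]$, I would show that every $u\in L_\x\cap L_{\x'}$ admits a unique expression
\[
u=a_0+\sum_{j>0}a_j\,x_k^{\,j}+\sum_{j>0}b_j\,(x_k')^{j},\qquad a_0,a_j,b_j\in \k[x_i^{\pm1}:i\neq k].
\]
This comes from writing $u$ as an $x_k$-Laurent series with coefficients in $\k[x_i^{\pm1}:i\neq k]$: membership in $L_{\x'}$ forces each negative-$x_k$ coefficient to be divisible by the appropriate power of $P$, and after substituting $(x_k')^j=P^j/x_k^{\,j}$ the negative powers of $x_k$ are absorbed into positive powers of $x_k'$.

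The second and main step is the matching computation. In characteristic $p$ the Freshman's Dream makes $p$-th roots additive, so in $L_\x^{1/p}$ one gets
\[
u^{1/p}=a_0^{1/p}+\sum_{j>0}a_j^{1/p}\,x_k^{j/p}+\sum_{j>0}b_j^{1/p}\,P^{j/p}\,x_k^{-j/p}.
\]
Applying the $L_\x$-linear standard splitting term by term, only indices with $p\mid j$ can contribute a nonzero value (fractional $x_k$-exponents are killed by $\phi_\x$). For $j=pm$, the tautology $P^{pm}=(P^m)^p$ pulls $P^m$ out of the $p$-th root, and the mutation identity $P/x_k=x_k'$ converts $P^mx_k^{-m}$ into $(x_k')^m$, yielding
\[
\phi_\x(u^{1/p})=\phi_{\hat{\x}_k}(a_0^{1/p})+\sum_{m>0}x_k^m\,\phi_{\hat{\x}_k}(a_{pm}^{1/p})+\sum_{m>0}(x_k')^m\,\phi_{\hat{\x}_k}(b_{pm}^{1/p}),
\]
where $\phi_{\hat{\x}_k}$ is the standard splitting of the smaller Laurent ring $\k[x_i^{\pm1}:i\neq k]$. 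The symmetric calculation in $L_{\x'}^{1/p}$, with $x_k$ and $x_k'$ (and $a_j$ and $b_j$) exchanged, produces exactly the same formula for $\phi_{\x'}(u^{1/p})$; the output is already in the Step~1 form, so it automatically lies in $L_\x\cap L_{\x'}$, which is what permits iteration along a mutation path.

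The main obstacle is the bookkeeping in Step~2: one must decompose monomials like $b_j^{1/p}P^{j/p}x_k^{-j/p}$ in the $L_\x$-basis $\{\mathbf{x}^{a/p}:0\leq a_i<p\}$ of $L_\x^{1/p}$ to see that precisely the indices with $p\mid j$ survive, and then verify that the surviving contributions match between the two expansions. The matching succeeds because of the conspiracy between the additivity of $p$-th roots in characteristic $p$, the fact that $P^{pm}$ is a perfect $p$-th power, and the mutation relation $x_kx_k'=P$, which swaps powers of $x_k$ for powers of $x_k'$ at the cost of a factor in $\k[x_i^{\pm1}:i\neq k]$. Once the adjacent case is in hand, the global conclusion follows by iteration along mutation paths together with the intersection presentation $\U=\bigcap_\x L_\x$.
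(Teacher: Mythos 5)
Your argument is correct, and it reaches the theorem by a recognizably different organization than the paper, even though the engine of the computation is the same. The paper extends the standard splitting $\phi_\x$ to the ambient field $\mathcal{F}$ by localization and checks, on Laurent monomials of the adjacent cluster $\x'$, that this extension is again the standard splitting of $L_{\x'}$; the verification is exactly your key mechanism, namely that $(x_k')^{\alpha_k}=P^{\alpha_k}x_k^{-\alpha_k}$ with $P$ free of $x_k$, so fractional $x_k$-exponents are annihilated unless $p$ divides the exponent, in which case $P^{p\beta}$ is a perfect $p$-th power. Because the localized splittings then coincide on all of $\mathcal{F}^{1/p}$, preservation of $\U=\bigcap_{\x} L_\x$ and independence of the cluster are immediate, with no analogue of your Step~1. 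Your route instead never leaves the Laurent rings: you import the BFZ-style description of $L_\x\cap L_{\x'}$ (negative $x_k$-coefficients divisible by the corresponding power of $P$, giving the $a_j,b_j$ normal form) and verify agreement of $\phi_\x$ and $\phi_{\x'}$ only on $(L_\x\cap L_{\x'})^{1/p}$, then chain along mutation paths using $\U\subseteq L_{\x_i}\cap L_{\x_{i+1}}$. What this buys is an argument avoiding the fraction-field localization altogether; what it costs is the extra structural lemma and the iteration step, which the paper's stronger statement (equality of the two splittings as maps $\mathcal{F}^{1/p}\to\mathcal{F}$) renders unnecessary. Two points you leave implicit are harmless: clusters related by a relabeling of indices give the same standard splitting since the formula is symmetric in the variables, and the uniqueness asserted in your Step~1 is true but never actually used.
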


The point of the proof is the following simple but  \textbf{crucial observation}: a subalgebra $R$ of a Frobenius split  algebra $S$   is Frobenius split if $\phi(R^{1/p})\subseteq R$, where $\phi$ is some Frobenius splitting for $S$. 
\begin{proof}
Let $\x'=\{x_1',x_2,...,x_n\}$ be the mutation of $\x$ at 1, and let $P_1=x_1x_1'$ be the numerator of the mutation (see (\ref{eqmut})).  The standard splitting $\phi_\x$ of the Laurent ring $L_\x$ extends to a splitting of the fraction field $\mathcal{F}$ by localization; we check that this splitting restricts to the standard splitting $\phi_{\x'}$ on the Laurent ring $L_{\x'}$.
\[ \phi_\x((\x'^\alpha)^{1/p}) = \phi_\x((x_1'^{\alpha_1}x_2^{\alpha_2}...x_n^{\alpha_n})^{1/p})
=\phi_\x((P_1^{\alpha_1}x_1^{-\alpha_1}x_2^{\alpha_2}...x_n^{\alpha_n})^{1/p}).\]
Since $P_1$ does not contain $x_1$, the expression inside $\phi_\x$ is $x_1^{-\alpha_1}$ times a rational function of $x_2,...,x_n$.  It follows that this is zero, unless $\alpha_1=p\beta_1$ for some $\beta_1\in \mathbb{Z}$.  In this case, $\phi_\x((\x'^\alpha)^{1/p}) = \phi_\x((x_1'^{\beta_1p}x_2^{\alpha_2}...x_n^{\alpha_n})^{1/p}) 
= x_1'^{\beta_1}\phi_\x((x_2^{\alpha_2}...x_n^{\alpha_n})^{1/p})$.  Since this last expression is a Laurent monomial in $\x$, we find that
	\[ \phi_\x((\mathbf{x}'^{\alpha})^{1/p}):= \left\{ \begin{array}{cc}
		\mathbf{x}'^\beta & \alpha=p\beta \\
		0 & \text{otherwise} \\
	\end{array}	\right\},\]
and so $\phi_{\x}=\phi_{\x'}$ on $\mathcal{F}$.  Iterating this argument, we see every cluster $\x$ gives the same splitting on $\mathcal{F}$.  Since this splitting preserves each Laurent ring $L_\x$, it preserves their intersection $\U$.
\end{proof}

The cluster splitting of $\U$ inherits the universal property from Lemma \ref{lemma: uniLaurent}.

\begin{Thm}\label{Homupper}
Let $\U$ be an upper cluster algebra over a perfect field $\k$.  The cluster splitting $\phi$ of $\U$ freely generates $\Hom_\U(\U^{1/p},\U)$ as a $\U^{1/p}$-module.
\end{Thm}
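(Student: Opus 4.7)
The plan is to reduce the statement to Lemma \ref{lemma: uniLaurent} by localizing at each cluster. Concretely, I will show that the $\U^{1/p}$-module map
\[ \U^{1/p} \longrightarrow \Hom_\U(\U^{1/p},\U), \qquad s^{1/p} \mapsto \phi \circ s^{1/p} \]
is an isomorphism.

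For surjectivity, given any $\psi \in \Hom_\U(\U^{1/p},\U)$ and any cluster $\x$, I first observe that $\U[x_1^{-1},\ldots,x_n^{-1}]$ equals $L_\x := \k[x_1^{\pm 1},\ldots, x_n^{\pm 1}]$ by the Laurent phenomenon. Since localization is flat and commutes with the base change $\U \rightsquigarrow \U^{1/p}$, the map $\psi$ extends canonically to an $L_\x$-module map $\psi_\x\colon L_\x^{1/p} \to L_\x$. Applying Lemma \ref{lemma: uniLaurent} to $L_\x$ produces a unique element $s_\x \in L_\x$ with $\psi_\x = \phi_\x \circ s_\x^{1/p}$, where $\phi_\x$ is the standard splitting of $L_\x$. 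The next step is to check that the elements $s_\x$ for different clusters all coincide inside the fraction field $\mathcal F$: further localizing each $\psi_\x$ to $\mathcal F$ yields a common $\mathcal F$-linear extension $\psi_\mathcal F \colon \mathcal F^{1/p} \to \mathcal F$, while Theorem \ref{thm:UisSplit} tells us all the $\phi_\x$ restrict to the same splitting of $\mathcal F$. The uniqueness statement for fields in Remark \ref{gorcan} then forces $s_\x = s_{\x'}$ in $\mathcal F$ for all pairs of clusters. Denoting this common element by $s$, we have $s \in \bigcap_\x L_\x = \U$. The map $\phi \circ s^{1/p} \in \Hom_\U(\U^{1/p}, \U)$ agrees with $\psi$ after localizing at any cluster, and since $\U \hookrightarrow L_\x$ is injective, this gives $\psi = \phi \circ s^{1/p}$ on all of $\U^{1/p}$.

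For freeness, if $\phi \circ s^{1/p} = 0$ on $\U^{1/p}$ for some $s \in \U$, I localize at any cluster $\x$ to obtain $\phi_\x \circ s^{1/p} = 0$ on $L_\x^{1/p}$, and Lemma \ref{lemma: uniLaurent} then forces $s = 0$.

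The main subtlety is the compatibility step: the locally produced elements $s_\x$ must paste together into a single element of $\mathcal F$ that lies in each $L_\x$. This is exactly where the content of Theorem \ref{thm:UisSplit}---that all cluster splittings agree on $\mathcal F$---is essential; without it, no coherent $s$ could be extracted. Once that is in hand, everything else follows formally from flatness of localization at a cluster, the injectivity $\U \hookrightarrow L_\x$, and the uniqueness already established in the Laurent case.
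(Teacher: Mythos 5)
Your argument is correct and is essentially the paper's own proof: both localize at each cluster, invoke Lemma \ref{lemma: uniLaurent} to produce the local elements $s_\x$, use uniqueness over the fraction field $\mathcal F$ (where all cluster splittings agree) to see the $s_\x$ coincide, and conclude $s\in\bigcap_\x L_\x=\U$. The only difference is cosmetic ordering --- the paper passes to $\mathcal F$ first and then checks $s\in L_\x$, while you glue the $s_\x$ afterwards --- plus your explicit freeness check, which the paper leaves implicit in the uniqueness of $s$.
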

\begin{proof}
Consider a $\U$-module map $\psi:\U^{1/p}\rightarrow\U$. 
 This map induces, by localization, an
$\mathcal F$-linear map $\psi:\mathcal F^{1/p}\rightarrow\mathcal F$, which we also denote (somewhat abusively) by $\psi$.  Since  $\Hom_{\mathcal F}(\mathcal F^{1/p},\mathcal F)$ is a one dimensional vector space over $\mathcal F^{1/p}$ generated by the (localization of the) standard splitting $\phi,$ we can write $\psi$ as $\phi \circ s^{1/p}$ for some {\it unique} $ s \in \mathcal F$.
We aim to show that $s \in \U$. This will complete the proof, as every $\psi \in  \Hom_\U(\U^{1/p},\U)$ will then be the composition of the standard splitting with  pre-multiplication by a unique 
$s^{1/p}$  in $\U^{1/p}$.

To show that $s \in \U$, it suffices to show that $s \in L_\x$, where $L_\x$ is the Laurent ring on \emph{any} cluster $\x=\{x_1,x_2,...,x_n\}.$ Note that  $L_\x$ is the localization of $\U$ at the cluster variables $\{x_1,x_2,...,x_n\}$. Thus the map $\psi:\U^{1/p}\rightarrow \U$ inducts an $L_\x$-module map  $\psi_\x:(L_\x)^{1/p}\rightarrow L_\x$, which we again call $\psi$. 
 By Lemma \ref{lemma: uniLaurent}, there is a unique $s_\x \in L_\x$ such that $\psi_\x(r^{1/p}) = \phi\left((s_\x r)^{1/p}\right)$ for all $r\in L_\x$.   But now, localizing further, to the fraction field $\mathcal F$, this map is of course the same as the map $\phi \circ s^{1/p}$ from the first paragraph, that is, $\phi \circ s^{1/p} = \phi \circ s_\x^{1/p}.$
  So
 by the uniqueness of $s$, we see that $s = s_{\x} \in L_\x$. Since this works for any cluster $\x$, it follows that $s \in \U$.
\end{proof}


\begin{Rmk}
The existence and universality of the cluster splitting of $\U$ are closely related to the fact that the \emph{canonical module} $\omega_{\U /\k}$ is free (cf. Remark \ref{gorcan}).  This is addressed in Appendix \ref{section: canonical}, which also describes the relation to Frobenius splittings.  
\end{Rmk}

\section{$F$-regularity of locally acyclic cluster algebras}

Strong $F$-regularity is a strengthened form of  Frobenius splitting, first introduced by   Hochster and Huneke  in \cite{HH88}. 
 Strongly $F$-regular rings have many nice properties: they are Cohen-Macaulay, normal, and  have pseudorational singularities, to name a few.
 Our main theorem in this section is that locally acyclic cluster algebras are strongly $F$-regular.
 
\subsection{Strong $F$-regularity}

Fix a domain $R$ of characteristic $p>0$. We continue to assume that $R$ is $F$-finite, meaning that 
$R^{1/p}$ is finitely generated over $R$.  This is always satisfied for  algebras finitely generated over a perfect field. 

Strong $F$-regularity will be a splitting condition on iterates of the Frobenius map. For any natural number $e$, let $F^e: R \longrightarrow R$ denote the $e$-th iterate of Frobenius, so that $F^e(r) = r^{p^e}$ for all $r \in R$. 
As in the opening paragraphs of Section \S 3, it is convenient to replace the target copy of $R$ by the canonically isomorphic ring $R^{1/p^e}$ and view the Frobenius map as the inclusion 
$$
R \hookrightarrow R^{1/p^e}
$$ inside the algebraic closure of the fraction field of $R$.

If $R \hookrightarrow R^{1/p}$ splits, it is easy to see that every iterate $R \hookrightarrow R^{1/p^e}$ splits as well. Indeed, if $ \phi: R^{1/p} \rightarrow R$ is a Frobenius splitting, then there is a naturally induced $R$-module splitting $\phi^{e}: R^{1/p^e} \rightarrow R$ induced by composition: 
$$
\xymatrix@C=1.2cm{
R^{1/p^e}\ar[r]^{(\phi)^{1/p^{e-1}}} &
R^{1/p^{e-1}} \ar[r] & \dots \ar[r] & R^{1/p} \ar[r]^{\phi} &  R. 
}
$$

In particular, upper cluster algebra also have {\bf cluster splittings} $\phi^e$  for the inclusions $\U \hookrightarrow \U^{1/p^e},$ and one easily checks (using the same proof)  that it is a generator for $\Hom_\U(\U^{1/p^e},\U)$ as a $\U^{1/p^e}$-module as in 
Theorem \ref{Homupper}. 
\medskip

\begin{defi} An $F$-finite domain $R$ is {\it strongly $F$-regular\/} if for every non-zero element $x \in R$, there exists $e \in \Bbb N$ and $\psi \in \Hom_R(R^{1/p^e}, R)$ such that $\psi(x^{1/p^e}) = 1$.
\end{defi}

Though not apparent from its definition, strong $F$-regularity is a geometric property which restricts how bad singularities can be. The next two well-known theorems are examples of this. See also \cite{SmithMSRIsurvey} for a recent survey of F-regularity.induce

\begin{Thm}\label{RegisFreg}(\cite[Theorem 3.1 c)]{HH89})
An  $F$-finite regular ring  is strongly $F$-regular.
\end{Thm}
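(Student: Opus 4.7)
The plan is to use Kunz's theorem together with Krull's intersection theorem, reducing the question to a local computation. First, since strong $F$-regularity for $F$-finite Noetherian rings can be checked after localization at each maximal ideal (the module $R^{1/p^e}$ is finitely presented, so $\Hom_R(R^{1/p^e},R)$ commutes with localization at primes), I will reduce to the case that $R$ is a regular local ring with maximal ideal $\fm$, and for each nonzero $x \in R$ produce an $R$-linear map $\psi \colon R^{1/p^e}\to R$ with $\psi(x^{1/p^e})=1$ for some $e \in \N$.

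In this local setting, Kunz's theorem asserts that $R^{1/p^e}$ is a free $R$-module of finite rank for every $e \geq 0$. I aim to show that, for $e$ sufficiently large, $x^{1/p^e}$ extends to a free $R$-basis of $R^{1/p^e}$. By Nakayama's lemma, this is equivalent to $x^{1/p^e} \notin \fm R^{1/p^e}$, which in turn is equivalent to $x \notin \fm^{[p^e]}$, where $\fm^{[p^e]}$ denotes the Frobenius (bracket) power generated by the $p^e$-th powers of elements of $\fm$. Since $\fm^{[p^e]} \subseteq \fm^{p^e}$ and $\bigcap_e \fm^{p^e} = 0$ by Krull's intersection theorem, such an $e$ exists for any given nonzero $x$.

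Having chosen such $e$, fix any free $R$-basis $y_1,\ldots,y_r$ of $R^{1/p^e}$ and expand $x^{1/p^e} = \sum_{i=1}^r a_i y_i$ with $a_i \in R$. The condition $x^{1/p^e} \notin \fm R^{1/p^e}$ forces at least one $a_i$ to be a unit in $R$. Then the $R$-linear coordinate map $\psi \colon R^{1/p^e} \to R$ defined by $\psi(y_i) = a_i^{-1}$ and $\psi(y_j) = 0$ for $j \neq i$ satisfies $\psi(x^{1/p^e}) = 1$, as required.

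The main obstacle I anticipate is the reduction to the local case: one must carefully verify both that a globally defined $\psi$ behaves well under localization at $\fm$, and that a solution constructed in $R_\fm$ can be patched into one over $R$ whose value at $x^{1/p^e}$ is a unit modulo $\fm$ (from which $\psi(x^{1/p^e})=1$ is recovered by rescaling). Both steps hinge on the finite presentation of $R^{1/p^e}$ over $R$, which is exactly the $F$-finiteness hypothesis. The conceptual ingredients --- freeness of $R^{1/p^e}$ (Kunz) and the vanishing of intersections of Frobenius powers of $\fm$ (Krull) --- are where the regularity hypothesis is genuinely used.
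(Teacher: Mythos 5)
The paper itself gives no argument for this statement --- it is quoted directly from Hochster--Huneke \cite[Theorem 3.1 c)]{HH89} --- so the comparison is with the standard proof, and your local computation is exactly that proof: for $(R,\fm)$ regular local and $x\neq 0$, pick $e$ with $x\notin\fm^{[p^e]}$ (possible since $\fm^{[p^e]}\subseteq\fm^{p^e}$ and Krull gives $\bigcap_e\fm^{p^e}=0$), note $x^{1/p^e}\notin\fm R^{1/p^e}$ because $\fm R^{1/p^e}=(\fm^{[p^e]})^{1/p^e}$, and use Kunz's freeness of $R^{1/p^e}$ to extract a coordinate functional sending $x^{1/p^e}$ to $1$. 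That part is correct and complete.

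The soft spot is the reduction to the local case, which you yourself flag as the main obstacle but do not actually close. ``$\Hom_R(R^{1/p^e},R)$ commutes with localization'' only gives you, for each maximal ideal $\fm$, a globally defined $\psi$ and an exponent $e=e(\fm)$ with $\psi(x^{1/p^e})\notin\fm$; the proposed rescaling does not finish from there, because an element that is a unit modulo one $\fm$ need not be a unit of $R$, and the exponent $e$ may vary with $\fm$. The clean repair is either to invoke the paper's Lemma \ref{lemma: localFreg} (= \cite[Theorem 3.1 a)]{HH89}), which says strong $F$-regularity is exactly a local condition, or to run the usual image-ideal argument yourself: first note $R$ is Frobenius split (the image of the evaluation-at-$1$ map $\Hom_R(R^{1/p},R)\to R$ is an ideal whose localizations are all trivial by your local computation with $x=1$, $e=1$, hence is all of $R$); then, for fixed $x$, the ideals $I_e(x)=\{\psi(x^{1/p^e}):\psi\in\Hom_R(R^{1/p^e},R)\}$ satisfy $I_e(x)\subseteq I_{e+1}(x)$ (precompose with a splitting twisted by $x^{(p-1)/p^{e+1}}$), and your local argument shows no maximal ideal contains all of them, so by Noetherianity some $I_e(x)=R$, i.e.\ some single $\psi$ sends $x^{1/p^e}$ to $1$. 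With that step supplied (or cited), your proof is the standard one for \cite[Theorem 3.1 c)]{HH89}.
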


\begin{Thm}\label{thm:FregProperties}
A Noetherian strongly $F$-regular  ring is:\begin{enumerate}
\item Frobenius split;
\item Cohen-Macaulay and normal \cite[Theorem 3.1d]{HH89};
\item pseudo-rational (cf. \cite{Smi97});
\item Kawamata log terminal  whenever it is $\Bbb Q$-Gorenstein (\cite{HW02}).
\end{enumerate}
\end{Thm}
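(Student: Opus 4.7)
The plan is to give a direct proof of item (1) from the definition, and to invoke the cited references for items (2)--(4), since those are well-established theorems in the theory of $F$-singularities whose full proofs lie outside the scope of this paper.

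For (1), the strategy is to turn the definition of strong $F$-regularity into an actual splitting of $R\hookrightarrow R^{1/p}$ by choosing the test element with care. Assume $R$ is not a field (otherwise $R^{1/p}/R$ is a finite field extension and the statement is immediate). Pick any non-zero non-unit $c\in R$. The definition supplies some $e\ge 1$ (the case $e=0$ would force $c$ to be a unit) together with an $R$-linear map $\psi\colon R^{1/p^e}\to R$ satisfying $\psi(c^{1/p^e})=1$. Precomposing with multiplication by $c^{1/p^e}$ produces the $R$-linear map $\tilde\psi\colon R^{1/p^e}\to R$, $y\mapsto \psi(c^{1/p^e}\,y)$, which sends $1$ to $1$ and therefore splits the iterated Frobenius $R\hookrightarrow R^{1/p^e}$. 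Restricting $\tilde\psi$ along the intermediate inclusion $R^{1/p}\hookrightarrow R^{1/p^e}$ (along which $\tilde\psi$ continues to be $R$-linear and to fix $R$ pointwise) yields the desired Frobenius splitting.

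For (2), normality is essentially formal once (1) is in hand: a Frobenius split Noetherian domain is integrally closed in its fraction field, since any integral element has a $p$-th power in $R$ and can be recovered by applying the splitting. The Cohen-Macaulay assertion is more substantial and I would cite [HH89] directly; the standard proof proceeds by showing that in a strongly $F$-regular ring every ideal is tightly closed, so that parameter ideals are colon-capturing, and Cohen-Macaulayness then follows by the usual length arguments. For (3), I would cite Smith's theorem [Smi97]: strongly $F$-regular $F$-finite rings are $F$-rational, and $F$-rational $F$-finite rings are pseudo-rational. For (4), I would appeal to Hara-Watanabe [HW02], whose proof compares Frobenius splitting data along a log resolution with log discrepancies to conclude KLT in the $\Bbb Q$-Gorenstein case.

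The main obstacle for a genuinely self-contained treatment would be reproducing the tight-closure machinery needed for the Cohen-Macaulayness in (2) and the $F$-rationality comparison in (3); since neither is central to the paper's novel content and both are classical, the sensible approach is to cite them as done. The one step that really should be written out here is (1), because it is short, conceptually important (it is the foundational fact that strong $F$-regularity strengthens Frobenius splitting), and uses only the definitions just introduced.
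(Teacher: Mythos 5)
Your proposal is correct and matches the paper's treatment: the paper gives no proof of this theorem, presenting it as a catalogue of known results with the same citations (\cite{HH89}, \cite{Smi97}, \cite{HW02}) you invoke for items (2)--(4). Your added direct argument for (1) is also sound --- choosing a non-zero non-unit $c$ correctly rules out $e=0$ (an $R$-linear $\psi\colon R\to R$ with $\psi(c)=1$ would make $c$ a unit), and restricting the resulting splitting of $R\hookrightarrow R^{1/p^e}$ along $R^{1/p}\hookrightarrow R^{1/p^e}$ does give a map fixing $1$, hence a Frobenius splitting --- the paper simply treats this implication as standard and leaves it implicit.
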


Like most good geometric properties, strong $F$-regularity is a local condition; this is essential for our application to locally acyclic cluster algebras.

\begin{Lem}\label{lemma: localFreg}(\cite[Theorem 3.1 a)]{HH89})
A domain $R$ is strongly $F$-regular if and only if $R_\mathfrak{p}$ is strongly $F$-regular, for each prime ideal $\mathfrak{p}$.
\end{Lem}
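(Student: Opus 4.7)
The plan is to reduce strong $F$-regularity of $R$ to that of each $R_\mathfrak{p}$ by exploiting $F$-finiteness to commute $\Hom$ with localization. The essential algebraic input is the canonical identification
\[ S^{-1}\Hom_R(R^{1/p^e},R) \;\cong\; \Hom_{S^{-1}R}\bigl((S^{-1}R)^{1/p^e},\,S^{-1}R\bigr) \]
for any multiplicative set $S\subseteq R$: this is the standard fact that $\Hom$ out of a finitely presented module commutes with flat base change, applicable here because $F$-finiteness forces $R^{1/p^e}$ to be a finitely presented $R$-module in our Noetherian setting.

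For the forward direction, fix a prime $\mathfrak{p}$ and a nonzero $y=a/s\in R_\mathfrak{p}$ with $a\in R\setminus\{0\}$ and $s\notin\mathfrak{p}$. Strong $F$-regularity of $R$ applied to $a$ produces $e\geq 0$ and $\psi\in\Hom_R(R^{1/p^e},R)$ with $\psi(a^{1/p^e})=1$. Exploiting the natural $R^{1/p^e}$-module structure on $\Hom_R(R^{1/p^e},R)$ (in which $t^{1/p^e}\in R^{1/p^e}$ acts on $\psi$ by $(t^{1/p^e}\cdot\psi)(z):=\psi(t^{1/p^e}z)$), set $\psi':=s^{1/p^e}\cdot\psi$. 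Localizing at $\mathfrak{p}$ yields $\psi'_\mathfrak{p}\in\Hom_{R_\mathfrak{p}}(R_\mathfrak{p}^{1/p^e},R_\mathfrak{p})$ with
\[ \psi'_\mathfrak{p}(y^{1/p^e}) \;=\; \psi_\mathfrak{p}\bigl((sy)^{1/p^e}\bigr) \;=\; \psi_\mathfrak{p}(a^{1/p^e}) \;=\; 1, \]
establishing strong $F$-regularity of $R_\mathfrak{p}$.

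For the converse, for each nonzero $x\in R$ introduce the splitting ideal
\[ I(x) \;:=\; \sum_{e\geq 0}\bigl\{\psi(x^{1/p^e}) \,:\, \psi\in\Hom_R(R^{1/p^e},R)\bigr\} \;\subseteq\; R, \]
so that $R$ is strongly $F$-regular precisely when $I(x)=R$ for every nonzero $x$. The commutation of $\Hom$ with localization gives $I(x)_\mathfrak{p}=I^{R_\mathfrak{p}}(x/1)$ for every prime $\mathfrak{p}$; the hypothesis then forces $I(x)_\mathfrak{m}=R_\mathfrak{m}$ at every maximal ideal $\mathfrak{m}$, so $I(x)=R$.

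The only substantive point is the $\Hom$-localization identity; the rest is a careful unwinding of the $R^{1/p^e}$-module structure on $\Hom_R(R^{1/p^e},R)$, which I expect to be the main place where bookkeeping could go astray.
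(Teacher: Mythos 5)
The paper itself gives no proof of this lemma (it is quoted from Hochster--Huneke), so your argument must stand on its own against the standard one. Your forward direction does: localizing a map $\psi$ with $\psi(a^{1/p^e})=1$ and twisting by $s^{1/p^e}$ is exactly the usual device, and the identification $(R^{1/p^e})_{\mathfrak p}\cong (R_{\mathfrak p})^{1/p^e}$ together with the commutation of $\Hom$ with localization (legitimate since $R^{1/p^e}$ is finitely presented in the Noetherian, $F$-finite setting) is the right input.

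The converse, however, has a genuine gap at the sentence ``so that $R$ is strongly $F$-regular precisely when $I(x)=R$ for every nonzero $x$.'' Only one implication of that equivalence is trivial. Since $I(x)$ is the sum over all $e$ of the level-$e$ images $I_e(x)=\{\psi(x^{1/p^e}):\psi\in\Hom_R(R^{1/p^e},R)\}$, knowing $1\in I(x)$ only gives $1=\sum_i\psi_i(x^{1/p^{e_i}})$ with possibly different exponents $e_i$, whereas the definition of strong $F$-regularity demands a single $e$ and a single $\psi$ with $\psi(x^{1/p^e})=1$. What is missing is the monotonicity $I_e(x)\subseteq I_{e+1}(x)$, which turns $I(x)$ into an ascending union so that $1\in I(x)$ forces $1\in I_e(x)$ for one $e$; and this monotonicity is not free: it uses a Frobenius splitting of $R$ (given an $R^{1/p^e}$-linear splitting $\alpha:R^{1/p^{e+1}}\to R^{1/p^e}$, the map $z\mapsto\psi\bigl(\alpha\bigl(x^{(p-1)/p^{e+1}}z\bigr)\bigr)$ sends $x^{1/p^{e+1}}$ to $\psi(x^{1/p^e})$). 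So to complete your argument you must first deduce that $R$ is Frobenius split from the hypothesis---which can be done by the same localization principle, because splitting at level one is the surjectivity of the evaluation-at-$1$ map $\Hom_R(R^{1/p},R)\to R$, whose cokernel is a finitely generated module vanishing locally---and then establish the ascending chain property before concluding. As written, the converse is incomplete at its central step; with these two additions it becomes the standard Hochster--Huneke argument.
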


In practice, to determine
 whether or not $R$ is strongly $F$-regular, it often suffices to check the condition in the definition for a single  element $x$. 

\begin{Prop}(\cite[Theorem 3.3]{HH89}) \label{testelement}
Let $R$ be a Noetherian $F$-finite domain which is Frobenius split.  If there is some non-zero $c\in R$ such that
\begin{enumerate}
	\item $R_c= R[c^{-1}]$ is strongly $F$-regular, and
	\item there exists $e\in \mathbb{N}$ and $\psi\in \Hom_R(R^{1/p^e},R)$ such that $\psi(c^{1/p^e})=1$,
\end{enumerate}
then $R$ is strongly $F$-regular.
%
\end{Prop}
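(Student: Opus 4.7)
The plan is to fix an arbitrary nonzero $x \in R$ and build some $e' \in \mathbb{N}$ and $\mu \in \Hom_R(R^{1/p^{e'}}, R)$ with $\mu(x^{1/p^{e'}}) = 1$. The strategy proceeds in three stages: use strong $F$-regularity of $R_c$ to produce such a map after inverting $c$, clear denominators to obtain a map defined on $R$ (picking up a factor of $c^s$ in the image), and then exploit hypothesis (2) iteratively to cancel that power of $c$.

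For the first two stages, hypothesis (1) furnishes $e_1 \in \mathbb{N}$ and $\theta \in \Hom_{R_c}((R_c)^{1/p^{e_1}}, R_c)$ with $\theta(x^{1/p^{e_1}}) = 1$. Since $R$ is Noetherian and $F$-finite, $R^{1/p^{e_1}}$ is finitely presented over $R$, so formation of $\Hom_R(R^{1/p^{e_1}}, -)$ commutes with localization at $c$, giving a canonical isomorphism $\Hom_R(R^{1/p^{e_1}}, R) \otimes_R R_c \cong \Hom_{R_c}((R_c)^{1/p^{e_1}}, R_c)$. Clearing denominators writes $\theta = c^{-s}\eta$ for some $\eta \in \Hom_R(R^{1/p^{e_1}}, R)$ and some integer $s \geq 0$; evaluating at $x^{1/p^{e_1}}$ and using that $R$ is a domain yields $\eta(x^{1/p^{e_1}}) = c^s$ in $R$.

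For the third stage, define $\Psi_N \in \Hom_R(R^{1/p^{Ne_2}}, R)$ inductively by $\Psi_1 := \psi$ and $\Psi_{N+1} := \psi \circ \Psi_N^{1/p^{e_2}}$. Using $R$-linearity of $\psi$ together with the relation $\psi(c^{1/p^{e_2}}) = 1$ from hypothesis (2), a short induction on $N$ yields
\[
\Psi_N\bigl(c^{m_N/p^{Ne_2}}\bigr) = 1, \qquad m_N := 1 + p^{e_2} + p^{2e_2} + \cdots + p^{(N-1)e_2}.
\]
The inductive step rests on the identity $c^{m_{N+1}/p^{Ne_2}} = c \cdot c^{m_N/p^{Ne_2}}$ in $R^{1/p^{Ne_2}}$, so that $\Psi_N$ sends this element to $c$ by $R$-linearity, after which one application of $\psi$ closes the induction. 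Since $m_N \to \infty$, choose $N$ large enough that $m_N \geq s$.

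Finally, set $\eta' := c^{m_N - s} \cdot \eta \in \Hom_R(R^{1/p^{e_1}}, R)$, so that $\eta'(x^{1/p^{e_1}}) = c^{m_N}$, and take $\mu := \Psi_N \circ (\eta')^{1/p^{Ne_2}} \in \Hom_R(R^{1/p^{e_1 + Ne_2}}, R)$. Then
\[
\mu(x^{1/p^{e_1+Ne_2}}) = \Psi_N\bigl((c^{m_N})^{1/p^{Ne_2}}\bigr) = \Psi_N\bigl(c^{m_N/p^{Ne_2}}\bigr) = 1,
\]
verifying the strong $F$-regularity condition. The main obstacle is the iteration in the third stage: the $\Psi_N$ are not iterates of a fixed endomorphism but towers of Frobenius push-forwards whose source and target rings change at each layer, so one must carefully track which $R^{1/p^j}$-linearity is available at each stage to justify both the exponent arithmetic defining $m_N$ and the final evaluation of $\mu$.
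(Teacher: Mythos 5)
Your argument is correct. Its first two stages --- using strong $F$-regularity of $R_c$ to produce $\theta$ with $\theta(x^{1/p^{e_1}})=1$, then using $\Hom_{R_c}((R_c)^{1/p^{e_1}},R_c)\cong \Hom_R(R^{1/p^{e_1}},R)\otimes_R R_c$ to clear denominators and get $\eta(x^{1/p^{e_1}})=c^s$ in $R$ --- are exactly the paper's first two steps. Where you diverge is in how the stray power of $c$ is cancelled. The paper first arranges, by multiplying through by a further power of $c$, that the exponent is a power $q$ of $p$, and then invokes the Frobenius-split hypothesis: an (iterated) splitting $\phi\in\Hom_R(R^{1/q},R)$ sends $(c^q)^{1/q}=c$ to $c$, after which a single application of the map $\psi$ from hypothesis (2) finishes. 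You never use the splitting hypothesis; instead you iterate $\psi$ itself, building the tower $\Psi_N$ with $\Psi_N\bigl(c^{m_N/p^{Ne_2}}\bigr)=1$ for $m_N=1+p^{e_2}+\cdots+p^{(N-1)e_2}$, padding $\eta$ by $c^{m_N-s}$, and composing. Your bookkeeping is right: $m_{N+1}=m_N+p^{Ne_2}$ makes the inductive step an application of $R$-linearity of $\Psi_N$ followed by $\psi(c^{1/p^{e_2}})=1$, and the final composite $\Psi_N\circ(\eta')^{1/p^{Ne_2}}$ is an $R$-linear map on $R^{1/p^{e_1+Ne_2}}$ sending $x^{1/p^{e_1+Ne_2}}$ to $1$. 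What your route buys is that the ``Frobenius split'' hypothesis is never needed --- indeed it is redundant, since $z\mapsto\psi(c^{1/p^{e_2}}z)$ is already a splitting of $R\hookrightarrow R^{1/p^{e_2}}$ --- at the cost of the iteration; the paper's route buys brevity, since after normalizing the exponent to a power of $p$ one application of a splitting collapses $c^q$ to $c$ and one application of $\psi$ finishes.
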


\begin{proof} This is a well-known result lacking a precise easily accessible reference (C.f. \cite[Theorem 3.1 a]{HH89}),  so we include one here for completeness. Take any non-zero $x \in R$. By (1), there exists $\psi \in  \Hom_{R_c}({R_c}^{1/p^f},R_c)$ such that $\psi (x^{1/p^f}) = 1$. Since $\Hom_{R_c}(R_c^{1/p^n},R_c) =   \Hom_R(R^{1/p^n},R) \otimes_R R_c$, we know $\psi = \frac{1}{c^q}\widetilde{\psi} $ for some $\widetilde\psi \in \Hom_R(R^{1/p^n},R)$ and some natural number $q$, which without loss of generality, can be assumed a power of $p$. 
 So $\widetilde \psi (x^{1/p^n}) = c^q$. Now, because $R$ is Frobenius split, a splitting 
  $\phi \in  \Hom_R(R^{1/q},R)$  will send  $(c^q)^{1/q}$ to $c$. Composing this with the map given in (2) will produce a map sending $x^{1/qp^{e+n}}$ to 1. So $R$ is strongly $F$-regular.
\end{proof}

Such an element $c$ is  a {\it test element} for $R$. These types of test elements were first defined in  \cite{HH89}; for a recent survey of test elements in this context, see \cite{SmithMSRIsurvey} (more basic) or  \cite{ST11} (more advanced).

\subsection{$F$-regularity of locally acyclic cluster algebras}  We now establish the main result of this section, the $F$-regularity of locally acyclic cluster algebras.


\begin{Thm}\label{AFreg}
A locally acyclic cluster algebra $\A$ over an $F$-finite field $\k$ of prime characteristic is strongly $F$-regular.
\end{Thm}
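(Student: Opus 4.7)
The plan is to reduce the problem to the acyclic case via locality and then apply the test-element criterion of Proposition~\ref{testelement}. Since $\A$ is locally acyclic, it admits a cluster cover $\{\A_i\}_{i\in I}$ by acyclic cluster algebras; each $\A_i$ is a cluster localization of $\A$, so the open immersions $\Spec\A_i \hookrightarrow \Spec \A$ jointly cover $\Spec \A$. Strong $F$-regularity is local by Lemma~\ref{lemma: localFreg}, so every localization $\A_\mathfrak{p}$ coincides with $(\A_i)_{\mathfrak{p}}$ for some $i$, and it is enough to treat the acyclic case.

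Next, I would fix an acyclic seed $(\Q, \x = \{x_1, \ldots, x_n\})$ with mutable vertices $\{1, \ldots, m\}$ and apply Proposition~\ref{testelement} to the element $c := x_1 x_2 \cdots x_m$. Condition~(1) is immediate: $\A[c^{-1}]$ equals the Laurent ring $L_\x = \k[x_1^{\pm 1}, \ldots, x_n^{\pm 1}]$ (the frozens are already invertible in $\A$, and inverting $c$ inverts the remaining cluster variables), and $L_\x$ is $F$-finite and regular, hence strongly $F$-regular by Theorem~\ref{RegisFreg}. The Frobenius-split hypothesis of Proposition~\ref{testelement} holds because $\A = \U$ for acyclic seeds by Theorem~\ref{AcyclicPres}, and the cluster splitting $\phi$ of Theorem~\ref{thm:UisSplit} splits Frobenius on $\U = \A$.

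The essential work — and the principal obstacle — is condition~(2): exhibiting $\psi \in \Hom_\A(\A^{1/p^e}, \A)$ with $\psi(c^{1/p^e}) = 1$ for some $e$. Theorem~\ref{Homupper} (applied to the $e$-th iterate of Frobenius, as noted at the start of this section) tells me that every such $\psi$ has the form $\phi^e \circ s^{1/p^e}$ for a unique $s \in \A$, so I must find $s \in \A$ and $e \geq 1$ with $\phi^e\bigl((sc)^{1/p^e}\bigr) = 1$. Unwinding the definition of the cluster splitting in terms of the Laurent expansion of $sc$ in the cluster $\x$, this says that the constant monomial of $sc$ carries coefficient $1$ and every other monomial has at least one exponent not divisible by $p^e$. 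Because any fixed element of $\A$ has a Laurent expansion of bounded degree, taking $e$ sufficiently large makes the second condition automatic, and the essential task reduces to producing an $s \in \A$ whose Laurent expansion in $\x$ contains the monomial $c^{-1} = x_1^{-1}\cdots x_m^{-1}$ with coefficient $1$.

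The hard part is this last construction. My approach would exploit the acyclic structure of $\Q$: fix a topological ordering of the mutable vertices and use the sequence of successive source-mutations at $1, 2, \ldots, m$, producing new cluster variables $x_1^{\circ}, \ldots, x_m^{\circ}$ whose Laurent expansions in $\x$ are computable by iterating the exchange relations $x_i x_i' = P_i$. Choosing $s$ to be an appropriate product of these (possibly with corrections by Laurent monomials in the original cluster) should yield coefficient $1$ on the monomial $c^{-1}$. The toy case $\Q\colon 1 \to 2$ already illustrates the principle: $s = x_1' x_2'$ gives $sc = (1+x_2)(1+x_1)$, whose constant term is $1$; for longer quivers where intermediate mutable vertices are neither sources nor sinks, one must iterate through the topological order so that each successive mutation is at a source of the current quiver. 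Once a suitable $s$ and $e$ are exhibited, Proposition~\ref{testelement} yields the strong $F$-regularity of $\A$.
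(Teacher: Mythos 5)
Your reduction to the acyclic case, the choice of test element $c=x_1\cdots x_m$, and the verification of the hypotheses of Proposition \ref{testelement} other than condition (2) are all correct: $\A[c^{-1}]$ is indeed the Laurent ring $L_\x$ (hence strongly $F$-regular by Theorem \ref{RegisFreg}), and $\A=\U$ is Frobenius split by the cluster splitting. The genuine gap is exactly where you locate ``the hard part'': you never produce an element $s\in\A$ whose Laurent expansion in $\x$ carries the monomial $x_1^{-1}\cdots x_m^{-1}$ with coefficient $1$ (or at least a nonzero scalar in every characteristic), and the candidates you sketch already fail in small examples. For the path quiver $1\to 2\to 3$ one has $x_1'=(1+x_2)/x_1$, $x_2'=(x_1+x_3)/x_2$, $x_3'=(x_2+1)/x_3$, so $x_1'x_2'x_3'\cdot c=(1+x_2)^2(x_1+x_3)$ has constant term $0$; and the product of the variables produced by the source-mutation sequence $1,2,3$, namely $\frac{1+x_2}{x_1}\cdot\frac{x_1+x_3+x_2x_3}{x_1x_2}\cdot\frac{(x_1+x_3)(x_2+1)}{x_1x_2x_3}$, has coefficient $2$ on $c^{-1}$ --- salvageable by rescaling in odd characteristic, but identically zero in characteristic $2$. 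So the ``corrections by Laurent monomials'' are genuinely needed, may depend on $p$, and no argument is given that they exist; since by Theorem \ref{Homupper} the existence of such an $s$ is essentially equivalent to the statement being proved (for this test element), this missing construction is the entire content of the theorem at this point.

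The paper avoids this combinatorial problem by a different decomposition: induction on the number $m$ of mutable vertices. By acyclicity there is a mutable vertex $x_1$ with no arrows to other mutable vertices; freezing it gives a cluster localization $\A[x_1^{-1}]$ which is acyclic with $m-1$ mutable vertices, hence strongly $F$-regular by induction, and Proposition \ref{testelement} is then applied with the single variable $c=x_1$. For this test element the required map is immediate: since $x_1x_1'=p_1^++p_1^-$ with $p_1^-$ a unit (a monomial in frozen variables), the map $\phi^e\circ(x_1'/p_1^-)^{1/p^e}$ sends $x_1^{1/p^e}$ to $1$ once $e$ exceeds the exponents of $p_1^+/p_1^-$. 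If you wish to keep your one-shot approach with $c=x_1\cdots x_m$, you must prove the existence of the element $s$ above for every acyclic seed and every characteristic; as written, that step is missing, and the simplest natural candidates do not work.
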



\begin{proof} The assumption on the field ensures that $\A$ is $F$-finite.
Strong $F$-regularity is a local condition (see Lemma \ref{lemma: localFreg}), and so it can be checked on an open affine cover.  
Since locally acyclic cluster algebras admit an open affine cover by acyclic cluster algebras, it suffices to prove the theorem for acyclic cluster algebras.

Fix an \emph{acyclic} seed $(\Q,\x)$ for $\A$.  We induce on the number of mutable vertices to prove that $\A$ is strongly $F$-regular.

First, suppose  there is only one mutable variable; call it $x_1$. Then 
\[ \A = \k[ x_1,x_1',x_2^{\pm1},...,x_n^{\pm1}] / \langle x_1x_1'-p_1^+-p_1^- \rangle \]
where $p_1^+$ and $p_1^+$ are monomials in $x_2,...,x_n$ with disjoint supports.  This is a localization of the hypersurface algebra
\[ S = \k[ x_1,x_1',x_2^{},...,x_n^{}] / \langle x_1x_1'-p_1^+-p_1^- \rangle \]
Since at least one of $p_1^+$ and $p_1^-$ is not $1$,\footnote{Due to the assumption that mutable vertices must have at least one incident arrow.} the corresponding Jacobian ideal contains a monomial in $x_2,...,x_n$, and so the Jacobian ideal becomes trivial in the localization to $\A$.  Hence, $\A$ is regular, so it it strongly $F$-regular by Theorem \ref{RegisFreg}.

Assume now by induction that any acyclic quiver with $m-1$ mutable vertices defines a strongly $F$-regular cluster algebra.

Let $(\Q,\x)$ be an acyclic seed, with $m$ mutable vertices.  Since $\Q$ is acyclic, we can find a vertex which is mutable {\it and\/} admits no arrows  to any other mutable vertex---a \emph{sink}.
Label that vertex  $x_1$, and the remaining mutable vertices $x_2, \dots, x_r$.
Let $(\Q^\dagger,\x^\dagger)$ be  the `same'  seed but with $x_1$ also frozen. Since $(\Q^\dagger,\x^\dagger)$ is also acyclic, $\A(\Q^\dagger,\x^\dagger)=\U(\Q^\dagger,\x^\dagger)$ and so 
\[ \A(\Q^\dagger,\x^\dagger) = \A[x_1^{-1}] \]
is a cluster localization.  The seed $(\Q^\dagger,\x^\dagger)$ is acyclic with $m-1$ mutable vertices, and so by the inductive hypothesis, $\A[x_1^{-1}]$ is strongly $F$-regular.

Since $\A$ is acyclic, the cluster algebra $\A$ coincides with the upper cluster algebra $\U$ (Theorem \ref{AcyclicPres}), and so the cluster splitting from Theorem \ref{thm:UisSplit} is a Frobenius splitting for $\A$.  
Hence, by Proposition \ref{testelement}, it suffices to check only the test element $c:= x_1 $ in $\A$.  We construct a $\psi$ sending $x_1^{1/p^e}$ to $1$ directly, using the cluster splitting $\phi$.

Let $p_1^+$ and $p_1^-$ be the monomials appearing in the mutation formula at $x_1$, so that $x_1x_1' = p_1^++p_1^-$.  Choose $e$ large enough that each exponent appearing in $p_1^+$ or $p_1^-$ is less than $p^e$.  Since there are no arrows from $x_1$ to other mutable vertices, $p_1^-$ is a monomial only in the frozen variables; in particular, it is invertible.  Consider the map
\[ \psi = \phi^e\circ \left(\frac{x_1'}{p_1^-}\right)^{1/p^e}, \]
where $\phi^e$ is the cluster splitting of $\A \hookrightarrow \A^{1/p^e}.$
Then $$
\psi(x_1^{1/p^e}) = \phi^e\left(\left(\frac{x_1x_1'}{p_1^-}\right)^{1/p^e} \right) = 
\phi^e\left(\left(\frac{p_1^+}{p_1^-} + 1\right)^{1/p^e}\right).
$$  Since $p^e$ is greater than any exponent in the Laurent monomial $\frac{p_1^+}{p_1^-}$,  we know that $\phi^e$ kills that term, and so $\psi(x_1^{1/p^e})=1$.
By Proposition \ref{testelement}, this shows that $\A$  is strongly $F$-regular.
This completes the inductive step and the proof.
\end{proof}

\subsection{Characteristic zero consequences}

So far, our results are for cluster algebras over a field of positive characteristic.  By a standard miracle, these results imply similar consequences over fields of characteristic zero.

We first need to check that locally acyclic cluster algebras over $\ZZ$ behave as expected when tensored with a field $\k$.  Let $\A_\ZZ$ denote a cluster algebra over $\ZZ$.  Choosing any seed in $\A_\ZZ$ and replacing the cluster with a cluster over $\k$ determines a cluster algebra $\A_\k$ over $\k$, which is well-defined up to canonical isomorphism.  

\begin{Lem}
If $\A_\ZZ$ is locally acyclic, then $\A_\k$ is locally acyclic and $\k\otimes_\ZZ\A_\ZZ\simeq \A_\k$.
\end{Lem}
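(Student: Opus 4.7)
My plan is to reduce both assertions to the acyclic case, where Theorem \ref{AcyclicPres} supplies a concrete complete-intersection presentation that behaves transparently under base change.

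First I would record the elementary but essential observation that the cluster combinatorics -- the underlying quiver, the frozen/mutable partition, and the mutation rule -- is governed entirely by the quiver and so is independent of the base ring. A single seed $(\Q, \mathbf{x}_\ZZ)$ for $\A_\ZZ$ therefore determines a ``same-quiver'' seed $(\Q, \mathbf{x}_\k)$ for $\A_\k$, and by the Laurent phenomenon each cluster variable of $\A_\ZZ$ is a Laurent polynomial in $\mathbf{x}_\ZZ$ with $\ZZ$-coefficients whose $\k$-reinterpretation is the corresponding cluster variable of $\A_\k$. Assembling these identifications produces a natural $\k$-algebra surjection $\varphi \colon \k \otimes_\ZZ \A_\ZZ \twoheadrightarrow \A_\k$.

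Next I would treat the acyclic case. By Theorem \ref{AcyclicPres}, for an acyclic seed $(\Q, \mathbf{x})$,
\[ \A_\ZZ \cong \ZZ[x_1,\ldots,x_n,x_1',\ldots,x_m',x_{m+1}^{\pm 1},\ldots,x_n^{\pm 1}] \,/\, \langle x_k x_k' - p_k^+ - p_k^- \rangle_{k=1}^m, \]
a complete-intersection presentation whose defining relations have $\pm 1$ coefficients and depend only on the combinatorics of $\Q$. Applying $\k \otimes_\ZZ -$ yields the identical presentation interpreted over $\k$; invoking Theorem \ref{AcyclicPres} again (now over $\k$) shows this is precisely $\A_\k$, so $\varphi$ is an isomorphism.

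For the general locally acyclic case, I would glue via the cluster cover. Given an acyclic cluster cover $\{\A_\ZZ[f_i^{-1}]\}_{i \in I}$, the unit-ideal relation $1 = \sum a_i f_i$ witnessing the cover in $\A_\ZZ$ descends through the natural maps $\A_\ZZ \to \k \otimes_\ZZ \A_\ZZ$ and $\A_\ZZ \to \A_\k$, so $\{1 \otimes f_i\}$ and $\{\bar f_i\}$ each generate the unit ideal on their respective sides. Since the ``same'' family of acyclic cluster localizations then covers $\Spec(\A_\k)$, the local acyclicity of $\A_\k$ follows at once. Localizing $\varphi$ at each pair $(1 \otimes f_i, \bar f_i)$ yields the acyclic comparison map $\k \otimes_\ZZ \A_\ZZ[f_i^{-1}] \to \A_\k[\bar f_i^{-1}]$, an isomorphism by the previous step. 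Being an isomorphism is local on an affine open cover, so $\varphi$ is an isomorphism globally.

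The principal obstacle is upgrading the tautological surjection $\varphi$ to an isomorphism -- this can fail for arbitrary subrings under base change (it would fail, for instance, if $\ZZ[\mathbf{x}_\ZZ^{\pm 1}] / \A_\ZZ$ had $\ZZ$-torsion). The explicit acyclic complete-intersection presentation of Theorem \ref{AcyclicPres} is exactly what forestalls this pathology, and the cluster cover then propagates this good behavior to the locally acyclic setting.
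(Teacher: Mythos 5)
Your proposal is correct and follows essentially the same route as the paper: handle the acyclic case via the base-change-invariant complete-intersection presentation of Theorem \ref{AcyclicPres}, then use the acyclic cluster cover to see that the natural map $\k\otimes_\ZZ\A_\ZZ\to\A_\k$ is locally an isomorphism, hence an isomorphism, with local acyclicity of $\A_\k$ coming from the same cover. Your explicit unit-ideal bookkeeping and the remark about potential $\ZZ$-torsion are just a more detailed rendering of the paper's ``extension of scalars sends covers to covers'' step.
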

\begin{proof}
If $\A_\ZZ$ is acyclic, then any acyclic seed of $\A_\ZZ$ corresponds to an acyclic seed of $\A_\k$ with the same quiver.  The presentations of $\A_\ZZ$ and $\A_\k$ from Proposition \ref{AcyclicPres} coincide except for the ring of scalars, and so $\k\otimes_\ZZ\A_\ZZ\simeq \A_\k$.

If $\A_\ZZ$ is locally acyclic, let $\{(\A_i)_\ZZ\}_{i\in I}$ be a cover by acyclic cluster algebras.  By the previous paragraph, $\k\otimes_\ZZ(\A_i)_\ZZ \simeq (\A_i)_\k$ is an acyclic cluster localization of $\A_\k$.  Since extension of scalars sends covers to covers, $\{(\A_i)_\k\}_i$ is a cover of $\A_\k$.  Since the map $\k\otimes_\ZZ\A_\ZZ\rightarrow \A_\k$ is locally an isomorphism, it is an isomorphism.
\end{proof}

With this in hand, we may prove one of our main theorems.

\begin{Thm}
A locally acyclic cluster algebra over a field $\k$ of characteristic zero has (at worst) canonical singularities.
\end{Thm}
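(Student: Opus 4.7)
The plan is to deduce the characteristic-zero statement from Theorem \ref{AFreg} via the standard reduction-mod-$p$ correspondence between strong $F$-regularity and log terminal/canonical singularities.

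First, I would use the preceding lemma to pass to an integral model. Since the data defining a locally acyclic cluster algebra is purely combinatorial (a quiver together with a choice of frozen vertices), $\A_\k$ arises from a locally acyclic cluster algebra $\A_\Bbb Z$ over $\Bbb Z$ defined by the same data, and the preceding lemma gives $\A_\k\simeq\k\otimes_\Bbb Z\A_\Bbb Z$. Moreover, for every prime $p$, that same lemma yields $\A_{\Bbb F_p}\simeq \Bbb F_p\otimes_\Bbb Z\A_\Bbb Z$, which is again locally acyclic. Thus $\A_\Bbb Z$ is a flat model of $\A_\k$ over $\Spec(\Bbb Z)$ whose closed fibers are exactly the locally acyclic cluster algebras $\A_{\Bbb F_p}$ over $\Bbb F_p$.

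Second, I would apply Theorem \ref{AFreg}: each closed fiber $\A_{\Bbb F_p}$ is strongly $F$-regular. Hence $\A_\k$ is of \emph{dense} strongly $F$-regular type; in fact, every positive-characteristic fiber of this model is $F$-regular, which is considerably stronger than what the reduction-mod-$p$ machinery requires.

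Third, I would invoke the theorem of Hara--Watanabe \cite{HW02} (surveyed in \cite{Smi97b, SmithMSRIsurvey}): a normal $\Bbb Q$-Gorenstein finitely generated algebra over a characteristic zero field which has (dense) strongly $F$-regular type has Kawamata log terminal singularities. By Proposition \ref{propLA}, $\A_\k$ is a local complete intersection, hence Gorenstein, so its canonical divisor is Cartier. For Gorenstein varieties, Kawamata log terminal singularities coincide with canonical singularities, giving the desired conclusion.

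The main obstacle is essentially bookkeeping: verifying that the integral model $\A_\Bbb Z$ behaves correctly under base change to each residue field (which is exactly what the preceding lemma provides) and that $\A_\k$ meets the Gorenstein hypothesis of Hara--Watanabe (supplied by Proposition \ref{propLA}). A minor secondary point is that one may first reduce to the case where $\k$ is finitely generated over $\Bbb Q$ by noting that canonical singularities are both preserved and detected under faithfully flat extension of the ground field. All the essential geometric content is already contained in Theorem \ref{AFreg}.
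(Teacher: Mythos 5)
Your proposal is correct, and it follows the paper's reduction-mod-$p$ strategy up to the last step, where you diverge: the paper does \emph{not} invoke the Hara--Watanabe klt correspondence. Instead, having shown that every fiber $\A_{\mathbb F_p}\simeq \mathbb F_p\otimes_\ZZ\A_\ZZ$ is locally acyclic and hence strongly $F$-regular (Theorem \ref{AFreg}), the paper applies Smith's theorem \cite[Theorem 4.3]{Smi97} to conclude that $\A_\k$ has rational singularities, and then uses that $\A_\k$ is Gorenstein (Proposition \ref{propLA}) together with Elkik's result that Gorenstein rational singularities are canonical. Your route instead uses \cite{HW02}: dense strongly $F$-regular type plus $\Bbb Q$-Gorenstein gives Kawamata log terminal, and since the canonical class is Cartier in the Gorenstein case the integrality of discrepancies upgrades klt to canonical. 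Both arguments are valid and both need the same inputs from the paper (the integral model lemma, Theorem \ref{AFreg}, and the Gorenstein property from Proposition \ref{propLA}); the trade-off is that your appeal to the klt correspondence is more direct as a statement but rests on heavier machinery, whereas the paper's path through rational singularities uses the (older and somewhat more elementary) $F$-rational-type theorem and then a classical characteristic-zero fact. Your closing remark about reducing to $\k$ finitely generated over $\Q$ via faithfully flat descent is a reasonable extra precaution that the paper glosses over, since its model lives over $\ZZ$ and the cited type theorems are stated for finitely generated algebras; it does not change the substance of the argument.
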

\begin{proof}
Let $\A$ be a locally acyclic cluster algebra over $\k$, and let $\A_\ZZ$ be the corresponding locally acyclic cluster algebra over $\ZZ$.  By the preceding Lemma, for any prime $p\in \ZZ$, 
\[ \mathbb{F}_p\otimes_\ZZ \A_\ZZ \simeq \A_{\mathbb{F}_p}\]
is locally acyclic.
By Theorem \ref{AFreg}, $\A_{\mathbb{F}_p}$ is strongly $F$-regular.  On the other hand, $\A$ is Gorenstein by Corollary \ref{propLA}.  Thus, by the main theorem of  \cite[Theorem 4.3]{Smi97}, 
$\A_\k$ has (at worst) \emph{rational singularities}. But Gorenstein rational singularities are  canonical (cf. the discussion in \cite{Elk81}).
\end{proof}

%
%
%
%

\section{The non-locally-acyclic setting}

What can be said about cluster algebras and upper cluster algebras which are not locally acyclic?  We provide examples which demonstrate that strong $F$-regularity is still possible, but not necessary. We also support the general philosophy that $\U$ should be better-behaved than $\A$ by proving that $F$-regularity of $\A$ implies $F$-regularity of $\U$. 

We end this section by showing that a related algebra, the \emph{lower bound algebra}, is always Frobenius split. We do not know whether or not the lower bound algebra is always strongly $F$-regular.

\subsection{$F$-regularity of $\A$ implies $F$-regularity $\U$}

In this section, we consider a completely arbitrary cluster algebra $\A$  (possibly infinitely generated) over a perfect field.


\begin{Lem}\label{lem:extend} 
Fix any integer $e\geq 1$ and let $\varphi \in \Hom_{\mathcal{A}}(\A^{1/p^e},\A)$. Then $\varphi$ extends uniquely to a map in $\Hom_{\U}(\U^{1/p^e},\U)$.
\end{Lem}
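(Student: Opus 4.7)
The plan is to leverage the fact that $\mathcal{A}$, $\mathcal{U}$, and every cluster Laurent ring $L_\x$ all share the same fraction field $\mathcal{F}$, and that $L_\x$ is simultaneously a localization of $\mathcal{A}$ and of $\mathcal{U}$. Since $\mathcal{A}$ is a domain, $\varphi$ localizes canonically to an $\mathcal{F}$-linear map $\varphi_\mathcal{F}: \mathcal{F}^{1/p^e} \to \mathcal{F}$, which will contain all candidate extensions. Uniqueness then comes for free: any putative $\psi \in \Hom_\U(\U^{1/p^e},\U)$ extending $\varphi$ must agree with $\varphi$ on $\mathcal{A}^{1/p^e}$, hence induce the same localized map $\varphi_\mathcal{F}$, and since the structure map $\U^{1/p^e} \hookrightarrow \mathcal{F}^{1/p^e}$ is injective, $\psi$ is determined.

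For existence I would mimic the proof of Theorem \ref{Homupper}. By the $e$-th iterate analogue of Remark \ref{gorcan} applied to the field $\mathcal{F}$, there is a unique $s \in \mathcal{F}$ such that $\varphi_\mathcal{F} = \phi_\mathcal{F}^e \circ s^{1/p^e}$, where $\phi_\mathcal{F}^e$ is the cluster splitting extended to $\mathcal{F}$ (independent of the cluster, by Theorem \ref{thm:UisSplit}). I claim $s \in \U$. For each cluster $\x$, since $L_\x = \mathcal{A}[x_1^{-1},\dots,x_n^{-1}]$ is a localization of $\mathcal{A}$, the map $\varphi$ extends uniquely to $\varphi_\x : L_\x^{1/p^e} \to L_\x$. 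By the $e$-th iterate of Lemma \ref{lemma: uniLaurent}, there exists a unique $s_\x \in L_\x$ with $\varphi_\x = \phi_\x^e \circ s_\x^{1/p^e}$. Localizing $\varphi_\x$ further to $\mathcal{F}$ recovers $\varphi_\mathcal{F}$, so the uniqueness of $s$ in $\mathcal{F}$ forces $s = s_\x \in L_\x$ for every cluster $\x$. Therefore $s \in \bigcap_\x L_\x = \U$.

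Having established that $s \in \U$, I define the extension
\[ \psi := \phi^e \circ s^{1/p^e} : \U^{1/p^e} \longrightarrow \U, \]
using the cluster splitting $\phi^e$ of $\U$ from Section \ref{StspU}. This map is a well-defined $\U$-module homomorphism because $\phi^e(\U^{1/p^e}) \subseteq \U$ and $s \in \U$. To see it extends $\varphi$, observe that $\psi$ and $\varphi$ agree after further localization to $\mathcal{F}$ (both become $\varphi_\mathcal{F}$), and the inclusion $\mathcal{A} \hookrightarrow \mathcal{F}$ is injective.

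I expect the only real subtlety to be the step that identifies $s$ with each $s_\x$ across all clusters --- this is where cluster-independence of the cluster splitting (Theorem \ref{thm:UisSplit}) does the essential work, preventing the $\phi_\x^e$ in different Laurent rings from contributing incompatible ``twists'' that would obstruct $s$ from lying in the intersection $\U$. Everything else is a formal consequence of $\mathcal{A} \subseteq \U \subseteq L_\x \subseteq \mathcal{F}$ and the universal property of the standard splitting.
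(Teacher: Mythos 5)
Your proposal is correct, but it takes a noticeably heavier route than the paper. The paper's own proof is two lines: since each Laurent ring $L_\x$ is a localization of $\A$ (by the Laurent phenomenon), $\varphi$ extends by localization to a map $L_\x^{1/p^e}\rightarrow L_\x$ for every cluster $\x$, and all these extensions are restrictions of one and the same $\mathcal{F}$-linear map on $\mathcal{F}^{1/p^e}$ (they are localizations of a single $\varphi$); hence that map carries $\U^{1/p^e}\subseteq \bigcap_\x L_\x^{1/p^e}$ into $\bigcap_\x L_\x=\U$, and uniqueness is the formal point you also make. In particular, no splitting is needed at all: your remark that the cluster-independence of the cluster splitting ``does the essential work'' overstates its role --- the compatibility across clusters is automatic because every $s_\x$ you produce is just the same localized map written in a different basis, and one can skip the $s$-bookkeeping entirely. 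What your detour through Lemma \ref{lemma: uniLaurent}, Remark \ref{gorcan} and the Theorem~\ref{Homupper}-style argument buys is an explicit description of the extension as $\phi^e\circ s^{1/p^e}$ with $s\in\U$, which is occasionally useful (it is exactly how the paper argues in Theorem \ref{Homupper} and in the non-$F$-regularity example); what it costs is the perfect-field hypothesis needed for the universal property of the standard splitting (harmless here, since the section assumes $\k$ perfect, but the direct localization argument works over any field) and an appeal to machinery that the bare statement does not require.
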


\begin{proof}
The map $\varphi$ extends, by localization, to the Laurent ring generated by the cluster variables in any given cluster. Hence, it preserves $\U$, the common intersection of all of these Laurent rings.
\end{proof}




\begin{Prop}\label{AFregUFreg}
If $\A$ is strongly $F$-regular then $\U$ is strongly $F$-regular.
\end{Prop}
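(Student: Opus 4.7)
The plan is to verify the definition of strong $F$-regularity for $\U$ directly, leveraging the strong $F$-regularity of $\A$ together with the extension lemma (Lemma \ref{lem:extend}). The key observation is that $\A$ and $\U$ share the same fraction field $\mathcal{F}$, so any non-zero element of $\U$ can be ``cleared" into $\A$ by multiplying by a suitable denominator, and splittings constructed over $\A$ can then be transported back to $\U$.

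First I would fix an arbitrary non-zero $u \in \U$. Since $u \in \mathcal{F} = \operatorname{Frac}(\A)$, one can write $u = b/a$ with $a, b \in \A$ and $a \neq 0$; equivalently, $au = b$ is a non-zero element of $\A$. By strong $F$-regularity of $\A$ applied to the element $au \in \A$, there exist $e \geq 1$ and $\varphi \in \Hom_\A(\A^{1/p^e}, \A)$ with $\varphi((au)^{1/p^e}) = 1$.

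Next I would apply Lemma \ref{lem:extend} to extend $\varphi$ uniquely to $\tilde\varphi \in \Hom_\U(\U^{1/p^e}, \U)$. Since $(au)^{1/p^e} \in \A^{1/p^e} \subseteq \U^{1/p^e}$ and $\tilde\varphi$ extends $\varphi$, we still have $\tilde\varphi((au)^{1/p^e}) = 1$. Now define
\[ \psi : \U^{1/p^e} \longrightarrow \U, \qquad \psi(v) := \tilde\varphi\bigl(a^{1/p^e} \cdot v\bigr). \]
Multiplication by $a^{1/p^e}$ is $\U^{1/p^e}$-linear, hence $\U$-linear via restriction of scalars along $\U \hookrightarrow \U^{1/p^e}$, so $\psi \in \Hom_\U(\U^{1/p^e}, \U)$. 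Finally,
\[ \psi(u^{1/p^e}) = \tilde\varphi\bigl(a^{1/p^e} \cdot u^{1/p^e}\bigr) = \tilde\varphi\bigl((au)^{1/p^e}\bigr) = 1, \]
which is precisely what the definition of strong $F$-regularity demands.

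The argument is short and essentially routine, so there is no real ``main obstacle.'' The one delicate point worth emphasizing is that the construction only invokes the splitting criterion at the \emph{single} cleared element $au \in \A$; no $F$-finiteness hypothesis on $\U$ is used, which is important since $\A$ (and hence possibly $\U$) is allowed to be infinitely generated in this section. The verification that $\psi$ is $\U$-linear is a formal consequence of the fact that multiplication by elements of $\U^{1/p^e}$ commutes with the $\U$-action on $\U^{1/p^e}$, and the extension of $\varphi$ to $\tilde\varphi$ is handed to us by Lemma \ref{lem:extend}.
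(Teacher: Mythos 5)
Your proof is correct and follows essentially the same route as the paper: clear the denominator of the element of $\U$ into $\A$ using the common fraction field, apply strong $F$-regularity of $\A$ at that cleared element, extend the resulting map via Lemma \ref{lem:extend}, and precompose with multiplication by $a^{1/p^e}$. Nothing further is needed.
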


\begin{proof}
Suppose that $c$ is a non-zero element of $\U$. Since $\A$ and $\U$ have the same fraction field, there is an $a\in \A$ for which $ac$ is a non-zero element of $\A$. Because $\A$ is strongly $F$-regular, there is an integer $e\geq 1$ and a map $\varphi \in \Hom_{\A}(\A^{1/p^e},\A)$ for which $\varphi((ac)^{1/p^e}) = 1$. By Lemma \ref{lem:extend}, we may extend $\varphi$ uniquely to a map $\widetilde{\varphi}:\U^{1/p^e}\rightarrow \U$. 

Let $m_a:\U^{1/p^e}\rightarrow \U^{1/p^e}$ be the multiplication map given by $m_a(x^{1/p^e}) = a^{1/p^e}x^{1/p^e}$. Then the composition $\widetilde{\varphi}\circ m_a$ is an element of $\Hom_{\U}(\U^{1/p^e},\U)$ which maps $c^{1/p^e}$ to $1$.
\end{proof}

In what follows, we focus on the $F$-regularity of upper cluster algebras.

\subsection{The Markov upper cluster algebra}

Consider the seed $(\Q,\x)$ defined in Figure \ref{fig: Markov}.  Observe that it has three mutable vertices and no frozen vertices.

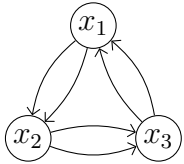
\begin{figure}[h!t]	
	\begin{tikzpicture}
	\begin{scope}[xshift=0in]
		\path[use as bounding box] (-1,-1) rectangle  (1,1.25);
		\node[mutable] (x) at (90:1) {$x_1$};
		\node[mutable] (y) at (210:1) {$x_2$};
		\node[mutable] (z) at (-30:1) {$x_3$};
		\draw[-angle 90,relative, out=15,in=165] (x) to node[inner sep=.1cm] (xy1m) {} (y);
		\draw[-angle 90,relative, out=-20,in=-160] (x) to node[inner sep=.1cm] (xy2m) {} (y);
		\draw[-angle 90,relative, out=15,in=165] (y) to node[inner sep=.1cm] (yz1m) {} (z);
		\draw[-angle 90,relative, out=-20,in=-160] (y) to node[inner sep=.1cm] (yz2m) {} (z);
		\draw[-angle 90,relative, out=15,in=165] (z) to node[inner sep=.1cm] (zx1m) {} (x);
		\draw[-angle 90,relative, out=-20,in=-160] (z) to node[inner sep=.1cm] (zx2m) {} (x);
	\end{scope}
	\end{tikzpicture}
\caption{The seed for the Markov cluster algebra in $\mathcal{F}=\k(x_1,x_2,x_3)$.}
\label{fig: Markov}
\end{figure}

Introduced in \cite{BFZ05}, the \emph{Markov cluster algebra} $\A= \A(\Q,\x)$ is a standard source of counterexamples and pathologies.  
For example, $\A=\A(\x,\B)$ is not a locally acyclic cluster algebra, and indeed $\A\subsetneq \U$ \cite[Theorem 1.26]{BFZ05}. Moreover, the Markov cluster algebra $\A$ is not Noetherian \cite{MulLA}. 

Nevertheless, the Markov upper cluster algebra $\U=\U(\Q,\x)$ is quite well-behaved.  It was shown in \cite{MM13} that it can be presented as the hypersurface algebra
\[ \U\cong \k[x_1,x_2,x_3,M]/\langle x_1x_2x_3M-x_1^2-x_2^2-x_3^2\rangle \]
Equivalently, the upper cluster algebra $\U$ is generated inside the field $\mathcal{F}$ by $x_1,x_2,x_3$ and the element $M=\frac{x_1^2+x_2^2+x_3^2}{x_1x_2x_3}$.

\begin{Prop}
If $char(\k)\neq2,3$, then the Markov upper cluster algebra $\U$ is strongly $F$-regular.
\end{Prop}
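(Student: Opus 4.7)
The plan is to apply Proposition \ref{testelement} with the test element $c := x_1 \in \U$. We must verify its three hypotheses: (i) $\U$ is Frobenius split, (ii) $\U[x_1^{-1}]$ is strongly $F$-regular, and (iii) there exists $e$ and $\psi \in \Hom_\U(\U^{1/p^e},\U)$ such that $\psi(x_1^{1/p^e}) = 1$.

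Condition (i) is immediate from Theorem \ref{thm:UisSplit} applied to the cluster $\{x_1,x_2,x_3\}$, which gives the cluster splitting $\phi$. For (ii), I would compute the Jacobian of $f := x_1 x_2 x_3 M - x_1^2 - x_2^2 - x_3^2$: the partial derivatives are $\partial_{x_i} f = (x_1 x_2 x_3 M)/x_i - 2 x_i$ for $i=1,2,3$ and $\partial_M f = x_1 x_2 x_3$. Simultaneous vanishing of all four, combined with $\Char(\k) \neq 2$, forces $x_1=x_2=x_3=0$. Hence the singular locus of $\U = \Spec\k[x_1,x_2,x_3,M]/(f)$ lies entirely inside $\{x_1=0\}$, so $\U[x_1^{-1}]$ is regular and therefore strongly $F$-regular by Theorem \ref{RegisFreg}.

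For (iii), I take $e=1$ and use Theorem \ref{Homupper}: every $\psi\in\Hom_\U(\U^{1/p},\U)$ is uniquely of the form $\phi\circ s^{1/p}$ for some $s\in\U$, so the condition $\psi(x_1^{1/p}) = 1$ becomes $\phi((sx_1)^{1/p}) = 1$ inside the Laurent ring $L := \k[x_1^{\pm 1}, x_2^{\pm 1}, x_3^{\pm 1}]$. I choose $s := \tfrac{1}{2}\, x_1 M^2 \in \U$, which is well-defined since $\Char(\k) \neq 2$. Using the Laurent expansion $M = \tfrac{x_1}{x_2 x_3} + \tfrac{x_2}{x_1 x_3} + \tfrac{x_3}{x_1 x_2}$ in this cluster, I compute
\[
sx_1 \;=\; \tfrac{1}{2}\, x_1^2 M^2 \;=\; \tfrac{x_1^4}{2 x_2^2 x_3^2} \;+\; \tfrac{x_2^2}{2 x_3^2} \;+\; \tfrac{x_3^2}{2 x_2^2} \;+\; \tfrac{x_1^2}{x_3^2} \;+\; \tfrac{x_1^2}{x_2^2} \;+\; 1.
\]
Every non-constant monomial above has some coordinate of its exponent equal to $\pm 2$ or $\pm 4$, none of which is divisible by $p$ when $\Char(\k) \neq 2$. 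Hence $\phi$ annihilates each non-constant term and $\phi((sx_1)^{1/p}) = 1$, as required.

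The main obstacle is step (iii): spotting the element $s$. Once found, the remaining verification is a routine Laurent expansion. The hypothesis $\Char(\k) \neq 2$ is essential in two places (the Jacobian criterion and the scalar $\tfrac{1}{2}$), while the stated hypothesis $\Char(\k) \neq 3$ does not appear to be needed for this line of argument; the authors may instead argue via a Fedder-style criterion applied directly to the hypersurface presentation, where an exclusion of characteristic $3$ could arise for a different technical reason.
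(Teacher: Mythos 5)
Your proof is correct, and it follows the same overall strategy as the paper: Proposition \ref{testelement} applied with the cluster splitting of Theorem \ref{thm:UisSplit} and an explicitly chosen pre-multiplier. The differences are in the specific choices, and they are worth comparing. The paper takes the test element $c = x_1x_2x_3$, so condition (1) of Proposition \ref{testelement} is immediate -- the localization at $x_1x_2x_3$ is the cluster Laurent ring, hence regular -- and no Jacobian computation is needed; it then uses the multiplier $s = \tfrac16 M^3$, so that $s\,x_1x_2x_3 = \tfrac16 (x_1^2+x_2^2+x_3^2)^3/(x_1x_2x_3)^2$ has constant term $\tfrac16\cdot 6 = 1$ and all nonconstant exponents in $\{\pm 2, 4\}$, which $\phi^e$ kills. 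The cost is that $6$ must be invertible, and that is exactly where $\Char(\k)\neq 3$ enters the paper's argument. Your choice $c = x_1$ requires the extra (correct) Jacobian/smoothness argument for $\U[x_1^{-1}]$, but your multiplier $s = \tfrac12 x_1 M^2$ only needs $2$ invertible, the surviving constant term is $\tfrac12\cdot 2 = 1$, and the nonconstant exponents $\pm 2, 4$ are prime to every odd $p$; so your argument in fact proves the proposition in characteristic $3$ as well, confirming your suspicion that the exclusion of $\Char(\k)=3$ is an artifact of the paper's multiplier rather than a genuine obstruction (it is not a Fedder-type issue, as you speculated). Two minor remarks: you do not need the full strength of Theorem \ref{Homupper} (which is stated over a perfect field) -- for condition (2) of Proposition \ref{testelement} it suffices that $\phi\circ s^{1/p}$ with $s\in\U$ lies in $\Hom_\U(\U^{1/p},\U)$, which is automatic since multiplication by $s^{1/p}$ preserves $\U^{1/p}$; and normalizing the constant term to $1$, as you did, conveniently removes any dependence on the chosen Frobenius splitting of $\k$.
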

\begin{proof}
Since $\U$ is Frobenius split by Theorem \ref{thm:UisSplit}, and the localization of $\U$ at $x_1x_2x_3$ is a Laurent ring, $x_1x_2x_3$ is a test element for $\U$. 
Consider now the morphism $\varphi:\U^{1/p^e}\rightarrow\U^{1/p^e}$ defined by $\varphi(-)=\phi^{e}((1/6\cdot M^3)^{1/p^e}-)$, where $\phi^{e}$ is the iterated cluster splitting of $\U$ defined in \ref{StspU} and $M$ is described as before. This morphism $\varphi$ is mapping $(x_1x_2x_3)^{1/p^e}$ to 1:
$$\varphi((x_1x_2x_3)^{1/p^e})=\phi\Big(\Big(1/6\cdot\Big(\frac{x_1^2+x_2^2+x_3^2}{x_1x_2x_3}\Big)^3x_1x_2x_3\Big)^{1/p^e}\Big)=$$
$$=\phi\Big(\Big(1/6\cdot \frac{\sum_{i+j\leq 3,i,j\neq 1}c_{i,j}x_1^{2i}x_2^{2j}x_3^{6-2i-2j}+6x_1^{2}x_2^{2}x_3^{2}}{(x_1x_2x_3)^2}\Big)^{1/p^e}\Big)=1,$$
where the $c_{i,j}$ are some combinatorial coefficients.
This shows that $\U$ is strongly $F$-regular.
\end{proof}

\subsection{A non-$F$-regular upper cluster algebra}

Generalizing the previous setting, consider the seed $(\Q,\x)$ defined in Figure \ref{fig: GMarkov} for some integer $a\geq2$.  

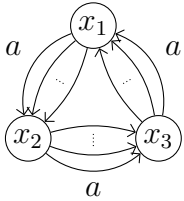
\begin{figure}[h!t]	
	\begin{tikzpicture}
	\begin{scope}[xshift=0in]
		\path[use as bounding box] (-1,-1) rectangle  (1,1.25);
		\node[mutable] (x) at (90:1) {$x_1$};
		\node[mutable] (y) at (210:1) {$x_2$};
		\node[mutable] (z) at (-30:1) {$x_3$};
		\draw[-angle 90,relative, out=15,in=165] (x) to node[inner sep=.1cm] (xy1m) {} (y);
		\draw[-angle 90,relative, out=-20,in=-160] (x) to node[inner sep=.1cm] (xy2m) {} (y);
		\draw[-angle 90,relative, out=-40,in=-140] (x) to node[above left] {$a$} (y); 
		\draw[densely dotted] (xy1m) to (xy2m);
		\draw[-angle 90,relative, out=15,in=165] (y) to node[inner sep=.1cm] (yz1m) {} (z);
		\draw[-angle 90,relative, out=-20,in=-160] (y) to node[inner sep=.1cm] (yz2m) {} (z);
		\draw[-angle 90,relative, out=-40,in=-140] (y) to node[below] {$a$}  (z);
		\draw[densely dotted] (yz1m) to  (yz2m);
		\draw[-angle 90,relative, out=15,in=165] (z) to node[inner sep=.1cm] (zx1m) {} (x);
		\draw[-angle 90,relative, out=-20,in=-160] (z) to node[inner sep=.1cm] (zx2m) {} (x);
		\draw[-angle 90,relative, out=-40,in=-140] (z) to node[above right] {$a$}  (x);
		\draw[densely dotted] (zx1m) to  (zx2m);
	\end{scope}
	\end{tikzpicture}
\caption{A seed in $\mathcal{F}=\k(x_1,x_2,x_3)$, where $a\geq 2$.}
\label{fig: GMarkov}
\end{figure} 

Let $\U=\U(\Q,\x)$ denote the associated upper cluster algebra. As shown in \cite{MM13}, this \emph{generalized Markov upper cluster algebra} can be presented as
\[ \U \cong \k[x_1,x_2,x_3,M]/\langle x_1x_2x_3M - x_1^a-x_2^a-x_3^a\rangle. \]

\begin{Prop}
If $a\geq 3$, then $\U$ is not strongly $F$-regular.
\end{Prop}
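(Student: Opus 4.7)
The plan is to use the iterated cluster splitting $\phi^e$ (the $p^e$-th analogue of Theorem \ref{Homupper} mentioned in the paragraph before Definition 4.3) to reduce strong $F$-regularity at the element $c = x_1 x_2 x_3 \in \U$ to a single coefficient calculation in the Laurent expansion inside the initial cluster. If $\U$ were strongly $F$-regular, then there would exist $e \geq 1$ and some $\psi \in \Hom_\U(\U^{1/p^e},\U)$ with $\psi(c^{1/p^e}) = 1$. By the iterated universality, every such $\psi$ factors uniquely as $\phi^e \circ s^{1/p^e}$ for some $s \in \U$, so it is enough to show that no $s \in \U$ makes $\phi^e((sc)^{1/p^e}) = 1$.

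Working in the Laurent ring $L_\x = \k[x_1^{\pm1}, x_2^{\pm1}, x_3^{\pm1}]$ of the initial cluster, the iterated standard splitting $\phi^e$ selects precisely those Laurent monomials whose exponents all lie in $p^e \ZZ$. Consequently $\phi^e((sc)^{1/p^e}) = 1$ forces the constant term of $sc = s \cdot x_1 x_2 x_3$ to equal $1$, which in turn forces the coefficient of $x_1^{-1} x_2^{-1} x_3^{-1}$ in the Laurent expansion of $s$ to equal $1$. The key claim therefore becomes the following assertion: \emph{for $a \geq 3$, no element of $\U$ has any $x_1^{-1} x_2^{-1} x_3^{-1}$ term in its Laurent expansion in the initial cluster.}

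This is a direct multinomial calculation. Using the presentation $\U \cong \k[x_1, x_2, x_3, M]/\langle x_1 x_2 x_3 M - x_1^a - x_2^a - x_3^a\rangle$, every element of $\U$ is a $\k$-linear combination of monomials $x_1^p x_2^q x_3^r M^k$ with $p, q, r, k \geq 0$. Substituting $M = (x_1^a + x_2^a + x_3^a)/(x_1 x_2 x_3)$ and expanding $M^k$ by the multinomial theorem gives
\[
x_1^p x_2^q x_3^r M^k = \sum_{i+j+l = k} \binom{k}{i,j,l}\, x_1^{p+ai-k} x_2^{q+aj-k} x_3^{r+al-k}.
\]
Demanding each of the three exponents to equal $-1$ and summing the resulting equations yields $p + q + r = (3 - a)k - 3$, which is strictly negative for every $a \geq 3$ and $k \geq 0$, contradicting $p, q, r \geq 0$. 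Hence no such Laurent term can appear, and $\U$ fails to be strongly $F$-regular.

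The only thing that requires care is that the coefficient extraction uses the canonical embedding $\U \hookrightarrow L_\x$, so the Laurent expansion of $s$ is unambiguous and the identity above is a genuine coefficient equality rather than an artifact of a chosen presentation; I do not foresee a substantial obstacle beyond this bookkeeping. The argument is uniform in all $a \geq 3$ and all positive characteristics (no restriction on $p$ is needed), and it sanity-checks against the ordinary Markov case $a = 2$, where $(3-a)k - 3 = k - 3$ first becomes non-negative at $k = 3$ and the obstruction disappears, in agreement with the preceding proposition on $F$-regularity of the Markov upper cluster algebra.
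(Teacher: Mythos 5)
Your proof is correct, and it reaches the conclusion by a somewhat different route than the paper, though the decisive input is the same: both arguments reduce, via the universality of the iterated cluster splitting (Theorem \ref{Homupper} and its $p^e$-version), to showing that no composite $\phi^e\circ s^{1/p^e}$ with $s\in\mathcal{U}$ can send the chosen element to $1$. The paper finishes with a grading argument: setting $\deg x_i=1$, $\deg M=a-3\geq 0$, the span $I$ of positive-degree elements is a nonzero ideal preserved by $\phi^e$, hence by every $\psi\in\Hom_{\mathcal{U}}(\mathcal{U}^{1/p^e},\mathcal{U})$, so no element of $I$ maps to $1\notin I$; this is shorter, works for any test element of $I$ at once, and exhibits $(x_1,x_2,x_3)$ as a compatibly split (indeed test) ideal. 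You instead fix the single element $c=x_1x_2x_3$, observe that $\phi^e((sc)^{1/p^e})=1$ forces the coefficient of $x_1^{-1}x_2^{-1}x_3^{-1}$ in $s$ to be $1$, and rule this out by a direct multinomial expansion from the presentation; your summed exponent identity $p+q+r=(3-a)k-3$ is exactly the degree count $\deg M=a-3$ in disguise, so your computation is a concrete, pointwise instance of the paper's grading statement. What your version buys is an elementary, self-contained coefficient obstruction (with the pleasant sanity check at $a=2$); what it gives up is the structural conclusion about the whole positive-degree ideal. Two small points of care: your exponent variables $p,q,r$ clash notationally with the characteristic $p$, and the universality theorem you invoke is stated over a perfect field, so (as in the paper's own proof) that hypothesis is implicitly in force.
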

\begin{proof}
Notice that $\U$ is graded, with
\[ \deg(x_1)=\deg(x_2)=\deg(x_3) =1,\;\;\; \deg(M) = a-3 \]
When $a\geq 3$, every homogeneous element in $\U$ has degree at least $0$.  As a consequence, the span of the positive degree elements forms a non-zero ideal $I$.

The cluster splitting $\phi$ sends positive degree elements to positive degree elements or zero, so $\phi^e(I^{1/p^e}) \subseteq I$ for any $e$.  By Theorem \ref{Homupper}, any $\psi\in \Hom_\U(\U^{1/p^e},\U)$ can be written as
\[ \psi = \phi^e(s^{1/p^e} - ) \]
for some $s\in \U$.  Since $(sI)^{1/p^e} \subseteq I^{1/p^e}$, we see that $\psi(I^{1/p^e})\subseteq I^{1/p^e}$ for any $\psi\in \Hom_\U(\U^{1/p^e},\U)$.  In particular, for any $c\in I$, there is no $\psi \in \Hom_\U(\U^{1/p^e},\U)$ such that $\psi(c)=1\not\in I$, and so $\U$ is not strongly $F$-regular.
\end{proof}

%
%

By Proposition \ref{AFregUFreg}, this extends to the cluster algebra as well.

\begin{Cor}
If $a\geq 3$, then $\A(\Q,\x)$ is not strongly $F$-regular.
\end{Cor}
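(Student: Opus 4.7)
The plan is to deduce this corollary immediately from the preceding Proposition together with Proposition \ref{AFregUFreg} by contrapositive. The previous Proposition established that, for $a\geq 3$, the generalized Markov upper cluster algebra $\U(\Q,\x)$ fails to be strongly $F$-regular. Proposition \ref{AFregUFreg} asserts that strong $F$-regularity of $\A$ forces strong $F$-regularity of the associated upper cluster algebra $\U$. Contrapositively, if $\U(\Q,\x)$ is not strongly $F$-regular, then $\A(\Q,\x)$ is not strongly $F$-regular either, which is exactly the claim.

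Concretely, I would write: suppose for contradiction that $\A(\Q,\x)$ is strongly $F$-regular. Then by Proposition \ref{AFregUFreg}, the upper cluster algebra $\U(\Q,\x)$ is also strongly $F$-regular, contradicting the previous Proposition. Hence $\A(\Q,\x)$ is not strongly $F$-regular when $a\geq 3$.

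There is essentially no obstacle here; the real work was done in the two preceding results. The only thing to double-check is that Proposition \ref{AFregUFreg} applies in this setting, which it does because it was stated for a completely arbitrary cluster algebra over a perfect field (and the seed in Figure \ref{fig: GMarkov} gives a cluster algebra whose upper cluster algebra is the generalized Markov upper cluster algebra analyzed just above). No separate test-element argument, grading argument, or extension-of-scalars argument is needed for this corollary itself—everything has been packaged into the two preceding results.
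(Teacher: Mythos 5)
Your argument is correct and is exactly the paper's: the corollary is obtained by applying Proposition \ref{AFregUFreg} in contrapositive form to the preceding proposition showing $\U$ is not strongly $F$-regular. Nothing further is needed.
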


\begin{Rmk}
The positive degree elements in $\U$ form an ideal $(x_1, x_2, x_3)$ stable under all maps in $ \Hom_\U(\U^{1/p^e},\U)$, that is, compatible with respect to every element of $ \Hom_\U(\U^{1/p^e},\U)$. This is the {\it test ideal} of $\U$.  See \cite{ST11}.
\end{Rmk}

\subsection{Lower bound algebras}

Fix a seed $(\Q,\x)$, where $\x = (x_1,x_2,\dots,x_n)$. As before, let 
\[p_i^{+} := \prod_{j\rightarrow i}x_j,  \textrm{ and  }~~~ p_i^{-} := \prod_{j \leftarrow i} x_j,\] 
and let $x_i' := (p_i^++p_i^-)x_i^{-1}$. The algebra $\mathcal{L}(\Q,\x)$ defined by
\[
\mathcal{L}(\Q,\x):=\k[x_1,x_2,\dots,x_n,x_1',x_2',\dots,x_n']\subseteq \mathcal{F}=\k(x_1,\dots,x_n)
\]
is called the \textbf{lower bound algebra} associated to the seed $(\Q,\x)$. Notice that $\mathcal{L}(\Q,\x)\subseteq \mathcal{A}(\Q,\x)$. This inclusion is an equality if and only if $\Q$ is an acyclic quiver (cf. \cite[Theorem 1.20]{BFZ05}). 

\begin{Lem}
The kernel $L$ of the surjective ring homomorphism 
\[ \k[x_1,\dots,x_n,y_1,\dots,y_n]\longrightarrow \mathcal{L}(\Q,\x),~~~ x_i \mapsto x_i, ~~~ y_i \mapsto x_i' \]
is a prime component of the ideal $I := \langle x_1y_1- (p_1^{+} + p_1^{-}), \dots, x_ny_n-(p_n^+ + p_n^-)\rangle$. 
\end{Lem}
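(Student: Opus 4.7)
The plan is to use Krull's generalized principal ideal theorem. The containment $I\subseteq L$ is immediate: under the given surjection, each generator $x_iy_i-(p_i^++p_i^-)$ of $I$ maps to $x_ix_i'-(p_i^++p_i^-)$, which vanishes by the very definition $x_i':=(p_i^++p_i^-)x_i^{-1}$. So $L$ is a prime ideal containing $I$, and the lemma reduces to showing that $L$ is \emph{minimal} among such primes.

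To that end, I would first pin down $\mathrm{height}(L)$ by computing $\dim\mathcal{L}(\Q,\x)$. Since $\mathcal{L}(\Q,\x)$ is a finitely generated $\k$-algebra and a domain satisfying
\[\k[x_1,\dots,x_n]\subseteq \mathcal{L}(\Q,\x)\subseteq \mathcal{F}=\k(x_1,\dots,x_n),\]
its fraction field is exactly $\mathcal{F}$, which has transcendence degree $n$ over $\k$. Hence $\dim \mathcal{L}(\Q,\x)=n$, and because the polynomial ring $R:=\k[x_1,\dots,x_n,y_1,\dots,y_n]$ is catenary and equidimensional, $\mathrm{height}(L)=\dim R-\dim(R/L)=2n-n=n$.

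Finally, because $I$ is generated by exactly $n$ elements, Krull's height theorem guarantees that every minimal prime over $I$ has height at most $n$. In particular, any minimal prime of $I$ sitting inside $L$ has height $\leq n$; but $L$ itself already has height $n$, so $L$ must itself be minimal over $I$, i.e.\ a \emph{prime component} of $I$. I do not foresee any real obstacle: the only step that warrants attention is the dimension computation for $\mathcal{L}(\Q,\x)$, and this follows at once from identifying its fraction field with $\mathcal{F}$.
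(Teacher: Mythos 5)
Your reduction and your dimension count are fine: $I\subseteq L$ is immediate, the fraction field of $\mathcal{L}(\Q,\x)$ is $\k(x_1,\dots,x_n)$, so $\dim\mathcal{L}(\Q,\x)=n$ and hence $\operatorname{height}(L)=2n-n=n$ in the (catenary) polynomial ring $S=\k[x_1,\dots,x_n,y_1,\dots,y_n]$. The gap is in the final step: Krull's height theorem gives an inequality in the wrong direction. It bounds the height of a minimal prime of $I$ \emph{above} by $n$, whereas to conclude that $L$ is itself minimal over $I$ you must rule out a prime $P$ with $I\subseteq P\subsetneq L$; such a $P$ would have height at most $n-1$, which is perfectly consistent with Krull's bound, so no contradiction arises. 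What is actually needed is a \emph{lower} bound: every prime between $I$ and $L$ has height at least $n$ --- equivalently, $V(I)$ has no component of dimension greater than $n$ passing through the generic point of $V(L)$. Krull's theorem never excludes such excess-dimensional components, so as written your argument does not establish minimality.

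The missing input can be supplied in two ways. One is to show $\operatorname{height}(I)=n$: since each $p_i^{+}+p_i^{-}$ involves only the $x$-variables, with respect to a term order in which the $y_i$ dominate the initial terms of the generators are $x_1y_1,\dots,x_ny_n$, whose ideal has height $n$; hence $\dim S/I\leq n$, so every minimal prime of $I$ has height exactly $n$ (alternatively, the generators form a regular sequence and $S$ is Cohen--Macaulay, so $I$ is unmixed), and then the minimal prime of $I$ contained in $L$ must equal $L$ because a strict inclusion of primes would force a drop in height. The paper instead avoids dimension theory altogether by localizing: after inverting $x_1\cdots x_n$, each generator $x_iy_i-(p_i^{+}+p_i^{-})$ becomes a unit times $y_i-x_i'$, so
\[ I\,S[(x_1\cdots x_n)^{-1}]=\langle y_1-x_1',\dots,y_n-x_n'\rangle \]
is prime, and its contraction to $S$ is exactly $L$; this exhibits $L$ directly as a minimal prime of $I$. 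Either repair closes the gap, but some such argument is genuinely required.
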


\begin{proof}
Since $\mathcal{L}(\Q,\x)$ is a domain, $L$ is a prime ideal. To see that $L$ is a component of $I$, let $S = \k[x_1,\dots,x_n,y_1,\dots,y_n]$, and observe that 
\[
I S[(x_1\cdots x_n)^{-1}] = \langle y_1-x_1',\dots,y_n-x_n'\rangle.
\]
Since $L = \langle y_1-x_1',\dots,y_n-x_n'\rangle\cap S$, it follows that $(I S[(x_1\cdots x_n)^{-1}])\cap S = L$, and thus $L$ is a prime component of $I$. 
\end{proof}




\begin{Prop}
The lower bound algebra $\mathcal{L}(\Q,\x)$ is Frobenius split. 
\end{Prop}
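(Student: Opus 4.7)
The natural plan is to apply the crucial observation used in the proof of Theorem \ref{thm:UisSplit}: a subalgebra $R$ of a Frobenius split algebra $S$ is itself Frobenius split whenever some Frobenius splitting of $S$ preserves $R$. Since $\mathcal{L}(\Q,\x)\subseteq\A(\Q,\x)\subseteq\U(\Q,\x)$, and $\U(\Q,\x)$ carries the cluster Frobenius splitting $\phi$ from Theorem \ref{thm:UisSplit}, we are reduced to verifying the inclusion $\phi(\mathcal{L}^{1/p})\subseteq\mathcal{L}$.

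First, I would reduce this inclusion to a finite combinatorial check. Because $\phi$ is $\U$-linear, it is in particular $\mathcal{L}$-linear, so it suffices to verify $\phi(M^{1/p})\in\mathcal{L}$ on a set of $\mathcal{L}$-module generators (via Frobenius) of $\mathcal{L}^{1/p}$. Using the relations $x_ix_i'=p_i^{+}+p_i^{-}$ to place elements of $\mathcal{L}$ in normal form, such a generating set is given by monomials $M=\prod_i x_i^{a_i}(x_i')^{b_i}$ satisfying $a_ib_i=0$ and $0\leq a_i,b_i<p$, together with (if $\k$ is not perfect) elements of a fixed $\k$-basis of $\k^{1/p}$.

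Next, for each such $M$ I would expand $M$ as a Laurent polynomial in the cluster $\x$. Substituting $x_j'=(p_j^{+}+p_j^{-})/x_j$ and applying the multinomial theorem to the factors $(p_j^{+}+p_j^{-})^{b_j}$ exhibits $M$ as a $\k$-linear combination of Laurent monomials $\lambda\,\x^{\mathbf{e}(k)}$ indexed by tuples $(k_j)$ with $0\leq k_j\leq b_j$. The cluster splitting $\phi$ kills every term whose exponent vector has any coordinate not divisible by $p$, and sends $(\lambda\,\x^{p\mathbf{c}})^{1/p}$ to $\lambda^{1/p}\x^{\mathbf{c}}$, producing an explicit Laurent polynomial in $\x$.

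The main obstacle is recognizing this surviving expression, which a priori lies only in the ambient Laurent ring $L_{\x}$, as an element of $\mathcal{L}$. This requires a combinatorial analysis: determining which multi-indices $(k_j)$ satisfy the congruences $\mathbf{e}(k)\equiv 0\pmod{p}$, and showing that the resulting weighted sum reassembles into a polynomial in the generators $x_i$ and $x_i'$. As an alternative, one could use the preceding Lemma and attempt to construct a Frobenius splitting of $S=\k[x_1,\ldots,x_n,y_1,\ldots,y_n]$ that compatibly splits the ideal $I$; since minimal primes of a Noetherian Frobenius split ring are automatically compatibly split, this would descend to a splitting of $S/L=\mathcal{L}$. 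Either route ultimately rests on the same combinatorial verification, and that is where the real difficulty lies.
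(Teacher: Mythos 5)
Your proposal does not yet contain a proof: both routes you sketch stop exactly at the step that carries all the content, and you say so yourself (``that is where the real difficulty lies''). In your first route, the entire argument hinges on the unverified claim that the cluster splitting $\phi$ of $\U(\Q,\x)$ satisfies $\phi(\mathcal{L}^{1/p})\subseteq \mathcal{L}$. Unlike the inclusions $\phi(L_{\x}^{1/p})\subseteq L_{\x}$ used in Theorem \ref{thm:UisSplit}, this is not a formal consequence of anything proved in the paper: for a non-acyclic seed (e.g.\ the Markov quiver) $\mathcal{L}\subsetneq \A\subsetneq\U$, the surviving terms of $\phi(M^{1/p})$ for a monomial $M$ in the $x_i,x_i'$ are a priori only Laurent polynomials in $\x$, and you give no mechanism for reassembling them into polynomials in the generators $x_i,x_i'$. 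Whether this inclusion even holds is left open, so the first route is a conjecture, not an argument; it is also not the route the paper takes.

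Your second route is the paper's actual strategy, but the missing piece there is precisely the construction you defer: one must exhibit a concrete splitting of $S=\k[x_1,\dots,x_n,y_1,\dots,y_n]$ compatibly splitting $I=\langle x_1y_1-(p_1^++p_1^-),\dots,x_ny_n-(p_n^++p_n^-)\rangle$. The paper does this by taking $\psi$ to be projection onto the basis element $(x_1\cdots x_ny_1\cdots y_n)^{(p-1)/p}$ and pre-multiplying by $f^{(p-1)/p}$ with $f=\prod_i\bigl(x_iy_i-p_i^+-p_i^-\bigr)$; the check that this is a splitting is a one-line observation, namely that since the $p_i^{\pm}$ involve only the $x$-variables, the unique monomial of $f^{p-1}$ divisible by every variable to the power $p-1$ is $(x_1\cdots x_ny_1\cdots y_n)^{p-1}$, with coefficient $1$. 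Compatibility of $\langle f\rangle$ is then automatic, and one passes to $L$ via sums and prime components of compatibly split ideals (using the preceding Lemma identifying $L$ as a prime component of $I$). So your remark that ``either route ultimately rests on the same combinatorial verification'' is inaccurate: the verification in the second route is easy once the correct $f$ is chosen, and choosing that $f$ and checking $\psi(f^{(p-1)/p})=1$ is exactly the content your proposal omits.
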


\begin{proof}
Fix any prime $p>0$, let $S := \k[x_1,\dots,x_n,y_1,\dots,y_n]$, and let $\mathcal{B}$ denote the $S$-module basis of $S^{1/p}$ consisting of all those monomials 
\[x_1^{a_1/p}\cdots x_n^{a_n/p}y_1^{a_{n+1}/p}\cdots y_n^{a_{2n}/p}, ~~~0\leq a_i < p.\] 

Define $\psi: S^{1/p}\longrightarrow S$ to be the $S$-linear map which takes value $1$ on the basis element $x_1^{(p-1)/p}\cdots x_n^{(p-1)/p}y_1^{(p-1)/p}\cdots y_n^{(p-1)/p}$ and $0$ on all other elements of $\mathcal{B}$. 

We will construct a Frobenius splitting of $S$ which descends to a Frobenius splitting of $\mathcal{L}$. To this end, let
\[f = \prod_{1\leq i\leq n}(x_iy_i-p_i^{+}-p_i^{-}),\]
and observe that pre-multiplication of $\psi$ by $f^{(p-1)/p}$ is a Frobenius splitting of $S$. Indeed, notice that the only monomial of $f^{(p-1)/p}$ with all of the variables $x_1,\dots, y_n$ is $x_1^{(p-1)/p}\cdots x_n^{(p-1)/p}y_1^{(p-1)/p}\cdots y_n^{(p-1)/p}$, which gets sent to $1$ by $\psi$. So, $\psi(f^{(p-1)/p}\cdot 1)=1$, and $\psi(f^{(p-1)/p}-)$ is thus a Frobenius splitting. Furthermore, if $J = \langle f \rangle$, then 
\[\psi(f^{(p-1)/p}J^{1/p})\subseteq J. \]
That is, $J$ is a compatibly split ideal. Because sums and prime components of compatibly split ideals are compatibly split (cf., for eg., \cite[Chapter 1.2]{BK05}), the ideal $L\subseteq S$ that defines the lower bound algebra $\mathcal{L}(\Q,\x)$ is compatibly split. The Frobenius splitting $\psi(f^{(p-1)/p}-): S^{1/p}\longrightarrow S$ therefore descends to a Frobenius splitting of the lower bound algebra.
\end{proof}

\appendix

\section{The canonical module of an upper cluster algebra}\label{section: canonical}

This appendix considers the canonical module of an upper cluster algebra $\U$ over a field $\k$.\footnote{The results remain true over $\ZZ$.} Since upper cluster algebras need not be Noetherian \cite{Speyer}, we must be careful which definition we use.  

\subsection{Canonical modules}

Let $S$ be a normal domain over $\k$, such that the fraction field $\mathcal{F}(S)$ has transcendence degree $n$ over $\k$.
Define the \textbf{canonical module} of $S$ over $\k$ to be the $S$-module
\[ \omega_{S/\k}:= (\Lambda^n_S \Omega_{S/\k})^{**}.\]
If $S$ is regular  (such as a field), then the double dual in the definition is unnecessary, and $\omega_{S/\k} = \Lambda^n_S\Omega_{S/\k}$.
This construction commutes with localization; in particular, there is a natural embedding 
\[\omega_{S/\k}\subseteq \omega_{\mathcal{F}(S)/\k}= \Lambda_{\mathcal{F}(S)}^n \Omega_{\mathcal{F}(S)/\k}.\]
 into the canonical module of the fraction field.  
%
%

\subsection{The log volume form}
Let $\U$ be an upper cluster algebra over $\k$.  The algebra $\U$ is normal and the transcendence degree of its fraction field over $\k$ is the rank $n$.  For a cluster $\x$ with functions $\{x_1,x_2,...,x_n\}$ indexed by $\{1,2,...,n\}$, let $L_\x$ denote the $\k$-Laurent ring in that cluster, and define the \textbf{log volume form}
\[ \mu_\x := \frac{dx_1\wedge dx_2\wedge \cdots \wedge dx_n}{x_1x_2\cdots x_n} \in \omega_{L_\x/\k}.\]
Note that a permutation of the indices may change the sign of this element. 

\begin{Prop}\label{prop: cangenLaurent}
The canonical module $\omega_{L_\x/\k}$ is free of rank one over $L_\x$, and generated by the log volume form $\mu_\x$.
\end{Prop}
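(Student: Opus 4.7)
The plan is to reduce the assertion to a standard computation about Kähler differentials on a Laurent ring, exploiting regularity to sidestep the double dual.

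First I would observe that $L_{\x}$ is regular: it is a localization of the polynomial ring $\k[x_1,\dots,x_n]$ at the multiplicative set generated by $x_1,\dots,x_n$, and localizations of regular rings are regular. As noted in the definition of $\omega_{S/\k}$, the double dual is unnecessary when $S$ is regular, so
\[\omega_{L_{\x}/\k} \;=\; \Lambda^n_{L_{\x}} \Omega_{L_{\x}/\k}.\]

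Next I would compute $\Omega_{L_{\x}/\k}$ explicitly. For the polynomial ring $\k[x_1,\dots,x_n]$ the module of Kähler differentials is free with basis $dx_1,\dots,dx_n$. Since Kähler differentials commute with localization, $\Omega_{L_{\x}/\k}$ is free over $L_{\x}$ of rank $n$ with the same basis $dx_1,\dots,dx_n$. Taking the top exterior power, $\omega_{L_{\x}/\k} = \Lambda^n_{L_{\x}}\Omega_{L_{\x}/\k}$ is free of rank one with generator $dx_1 \wedge \cdots \wedge dx_n$.

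Finally, to conclude that $\mu_{\x} = (x_1\cdots x_n)^{-1}\, dx_1\wedge\cdots\wedge dx_n$ is also a generator, I would note that each $x_i$ is a unit in $L_{\x}$, hence $x_1\cdots x_n$ is a unit, so $\mu_{\x}$ differs from the generator $dx_1\wedge\cdots\wedge dx_n$ by multiplication by a unit and therefore generates the same free rank-one module. There is no substantive obstacle here: the only subtlety worth flagging is the implicit choice of ordering of the cluster, which can flip the sign of $\mu_{\x}$ but of course does not affect whether it is a generator.
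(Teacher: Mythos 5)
Your proof is correct and matches the argument the paper implicitly relies on: the paper states this proposition without proof, having already remarked that the double dual in the definition of $\omega_{S/\k}$ is unnecessary for regular $S$, and your computation (differentials commute with localization, $\Omega_{L_\x/\k}$ free on $dx_1,\dots,dx_n$, and $x_1\cdots x_n$ a unit in $L_\x$) is exactly the routine verification being omitted. Nothing to add.
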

The log volume form is an invariant of the cluster algebra, up to sign.
\begin{Prop}
For two different clusters $\x,\y$ of $\U$,  we have $\mu_\x=\pm \mu_\y$.
\end{Prop}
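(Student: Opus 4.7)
The plan is to reduce the statement to the case of a single mutation, and then verify it by a direct computation using the mutation formula.

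First, recall that any two clusters of $\U$ are connected by a finite sequence of single mutations. Since the relation $\mu_\x = \pm \mu_\y$ is clearly transitive, it suffices to prove that $\mu_{\mu_k(\x)} = \pm \mu_\x$ for any mutable vertex $k$. So fix a cluster $\x = \{x_1,\ldots,x_n\}$ and let $\y = \{y_1,\ldots,y_n\}$ be its mutation at $k$, so that $y_i = x_i$ for $i \neq k$ and
\[ y_k = x_k' = \frac{p_k^+ + p_k^-}{x_k}, \]
where $p_k^+$ and $p_k^-$ are monomials in the $\{x_i : i \neq k\}$ only.

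Next I would compute $dy_1 \wedge \cdots \wedge dy_n$ inside $\omega_{\mathcal{F}/\k}$. For $i \neq k$ we have $dy_i = dx_i$. Differentiating the mutation formula gives
\[ dy_k = \frac{d(p_k^+ + p_k^-)}{x_k} - \frac{p_k^+ + p_k^-}{x_k^2}\, dx_k. \]
Because $p_k^\pm$ involve only variables $x_i$ with $i \neq k$, the differential $d(p_k^+ + p_k^-)$ is a combination of the $dx_i$ with $i \neq k$, and therefore contributes zero when wedged with all the other $dy_i = dx_i$, $i \neq k$. Only the last term survives, yielding
\[ dy_1 \wedge \cdots \wedge dy_n = -\,\frac{p_k^+ + p_k^-}{x_k^2}\, dx_1 \wedge \cdots \wedge dx_n. \]
Dividing by $y_1 \cdots y_n = y_k \prod_{i \neq k} x_i = \frac{p_k^+ + p_k^-}{x_k}\prod_{i \neq k} x_i$ and simplifying gives $\mu_\y = -\mu_\x$, as desired.

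The main obstacle is really bookkeeping: one must be careful that $p_k^+$ and $p_k^-$ depend only on the frozen/unmutated variables (which is built into the mutation formula), and that the wedge-product cancellation is handled correctly. Once both points are checked, the calculation shows that each mutation flips the sign of the log volume form, so along any sequence of mutations from $\x$ to $\y$ we obtain $\mu_\y = (-1)^N \mu_\x = \pm \mu_\x$, where $N$ is the length of the sequence.
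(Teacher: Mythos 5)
Your proposal is correct and follows essentially the same route as the paper: reduce to a single mutation, differentiate the exchange relation $x_k y_k = p_k^+ + p_k^-$, and use that $p_k^\pm$ involve only the variables $x_i$ with $i\neq k$ so that $d(p_k^++p_k^-)$ dies under the wedge, yielding $\mu_{\y}=-\mu_{\x}$ per mutation. The paper merely packages the same computation as an identity for $\tfrac{dx_k'}{x_k'}$ before wedging, so there is no substantive difference.
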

\begin{proof}
It suffices to check the proposition for a single mutation.  Let $\x'=\{x_1,x_2,...,x_i',...,x_n\}$, where $x_i' = \frac{p_i^++p_i^-}{x_i}$.
\[ \frac{dx_i'}{x_i'} = \frac{d(p_i^++p_i^-)}{x_ix_i'} - \frac{(p_i^++p_i^-)dx_i}{x_i^2x_i'} =
\frac{d(p_i^++p_i^-)}{p_i^++p_i^-} - \frac{dx_i}{x_i}\]
Since $p_i^+$ and $p_i^-$ are monomials in $\{x_1,x_2,...,x_{i-1},x_{i+1},...,x_n\}$,
\begin{eqnarray*}
\frac{dx_1\wedge \cdots\wedge dx_i'\wedge\cdots \wedge dx_n}{x_1\cdots x_i'\cdots x_n} &=& 
- \frac{dx_1\wedge \cdots\wedge dx_i\wedge\cdots \wedge dx_n}{x_1\cdots x_i\cdots x_n}
\end{eqnarray*}
Hence, $\mu_\x=-\mu_{\x'}$.  Iterating mutations or permuting the indices will change this form by at most a sign.
\end{proof}

\subsection{Canonical modules of upper cluster algebras}

Since either log volume form freely generates the canonical module after localizing to a cluster Laurent ring, it follows that they \emph{freely generate} the canonical module of $\U$.

\begin{Thm}\label{thm: loggen}
For an upper cluster algebra $\U$ over a field\footnote{The theorem remains true when $\k$ is an arbitrary normal domain.} $\k$, the canonical module 
$\omega_{\U/\k}$
is free of rank one over $\U$, and generated by a log volume form in any cluster.
\end{Thm}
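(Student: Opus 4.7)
My plan is to show $\omega_{\U/\k} = \U \cdot \mu$ as submodules of the one-dimensional $\mathcal{F}$-vector space $\omega_{\mathcal{F}/\k}$, where $\mu$ denotes the log volume form in any cluster (well-defined up to sign by the preceding proposition, so the choice of cluster is irrelevant). I would handle the two inclusions separately.

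\emph{First inclusion} $\omega_{\U/\k} \subseteq \U \cdot \mu$: The canonical module commutes with localization, and each cluster Laurent ring $L_\x$ is the localization of $\U$ at the cluster variables in $\x$. Thus
$$
\omega_{\U/\k} \otimes_\U L_\x \;=\; \omega_{L_\x/\k} \;=\; L_\x \cdot \mu
$$
by Proposition \ref{prop: cangenLaurent}. Viewing everything inside $\omega_{\mathcal{F}/\k}$, this means $\omega_{\U/\k} \subseteq L_\x \cdot \mu$ for each cluster, and intersecting over all clusters together with the defining identity $\U = \bigcap_\x L_\x$ yields $\omega_{\U/\k} \subseteq \U\cdot \mu$.

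\emph{Second inclusion} $\U \cdot \mu \subseteq \omega_{\U/\k}$: Using the first inclusion, write $\omega_{\U/\k} = I \cdot \mu$ for an ideal $I \subseteq \U$; the goal is $I = \U$. Since each $x_i \in \U$, the form $dx_1 \wedge \cdots \wedge dx_n$ lies in $\Lambda^n_\U \Omega_{\U/\k}$, and its image under the natural map to the double dual $\omega_{\U/\k}$ equals $x_1 \cdots x_n \cdot \mu$ in $\omega_{\mathcal{F}/\k}$. Hence $x_1 \cdots x_n \in I$ for every cluster $\x$, and by the first step $I\cdot L_\x = L_\x$ for every $\x$. Since $\omega_{\U/\k}$ is by construction a double dual, $I$ is a divisorial (reflexive) ideal, and in the normal domain $\U$ such an ideal is determined by its behavior at height-one primes.

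The main obstacle I foresee is the last step: showing that any divisorial ideal $I$ of $\U$ with $I L_\x = L_\x$ for every cluster $\x$ must equal $\U$. This reduces to proving that every height-one prime $\mathfrak{p}$ of $\U$ is avoided by some cluster (i.e., there is a cluster $\x$ with every $x_i \notin \mathfrak{p}$), so that $I L_\x = L_\x$ forces $I$ and $\U$ to agree at $\mathfrak{p}$. For locally acyclic cluster algebras this is immediate from the acyclic cover, but for arbitrary upper cluster algebras I would expect to need combinatorial input from the mutation formula $x_k x_k' = p_k^+ + p_k^-$, using that membership of $x_k$ in $\mathfrak{p}$ forces one of $p_k^\pm$ into $\mathfrak{p}$ (since $x_k' \in \U$ cannot have negative valuation), in order to rule out every cluster meeting $\mathfrak{p}$ simultaneously.
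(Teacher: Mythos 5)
Your first inclusion $\omega_{\U/\k}\subseteq \U\mu$ is essentially the paper's first step (the paper passes to the torsion-free quotient of $\Lambda^n_\U\Omega_{\U/\k}$ and lands in $\bigcap_\x L_\x\mu=\U\mu$), but your second inclusion rests on exactly the obstacle you flag, and that obstacle is a genuine gap. Your reduction needs two inputs that are not available in the stated generality. First, the fact that a reflexive ideal of a normal domain is determined by its localizations at height-one primes is a Krull/Noetherian fact; the appendix is written precisely to cover upper cluster algebras that need not be Noetherian (the paper cites Speyer for such examples, and only the subsequent Gorenstein corollary adds a Noetherian hypothesis), so divisorial-ideal theory and ``codimension two suffices'' arguments are off the table. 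Second, even granting Noetherianness, you would need the geometric claim that every height-one prime $\fp$ of $\U$ avoids some cluster, i.e.\ that the complement of the union of the cluster tori in $\Spec(\U)$ has codimension at least two. This is established nowhere in the paper and is a substantive assertion about the ``deep locus''; moreover your sketched argument for it does not work as written: if $x_k\in\fp$, then from $x_kx_k'=p_k^++p_k^-$ with $x_k'\in\U$ you only conclude that the \emph{sum} $p_k^++p_k^-$ lies in $\fp$, not that one of the monomials $p_k^{\pm}$ does, so vanishing cannot be propagated through the quiver in the way you indicate.

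The paper's proof sidesteps both issues by working with the dual module rather than the double dual. Any $\psi\in\Hom_\U(\Lambda^n_\U\Omega_{\U/\k},\U)$ factors through the torsion-free quotient, hence localizes at each cluster $\x$ to a map $L_\x\mu\to L_\x$, i.e.\ to multiplication by some $s_\x\in L_\x$; evaluating $\psi$ on a single element $b\mu$ with $b\neq 0$ in $\U$ (the product of the variables of one cluster) gives $bs_\x=bs_\y$, so all the $s_\x$ agree and define one element $s\in\bigcap_\x L_\x=\U$. This shows $(\Lambda^n_\U\Omega_{\U/\k})^*$ is freely generated by the canonical functional $\mu^*$, and dualizing once more yields $\omega_{\U/\k}=\U\mu$. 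The only structural input is the definition $\U=\bigcap_\x L_\x$; no Noetherian hypothesis and no codimension estimate on the deep locus is needed. To salvage your route you would have to restrict to the Krull case \emph{and} prove the codimension-two statement about the union of cluster tori, which is a nontrivial result in its own right rather than a consequence of the exchange relation alone.
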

\begin{proof}
Fix a log volume form $\mu$ in some cluster.
For any cluster $\x$, the localization $L_\x\otimes \Lambda_\U^n\Omega_{\U/\k}= L_\x \mu$ by Proposition \ref{prop: cangenLaurent}.  Let 
\[ \Lambda_\U^n\Omega_{\U/\k}\rightarrow f(\Lambda_\U^n\Omega_{\U/\k})\] 
be the quotient by the maximal torsion submodule.  Then $f(\Lambda_\U^n\Omega_{\U/\k})$ is contained inside the localization $L_\x\mu$, which is contained inside the generic canonical module $\Lambda^n_{\mathcal{F}(\U)}\Omega_{\mathcal{F}(\U)/\k}$.  Intersecting over all clusters, we obtain a map
\[\Lambda_\U^n\Omega_{\U/\k} \rightarrow f(\Lambda_\U^n\Omega_{\U/\k}) \subseteq \bigcap_{\text{clusters } \x}\left( L_\x\mu\right) = \left(\bigcap_{\text{clusters } \x } L_\x\right) \mu = \U\mu.\]
Define $\mu^*\in (\Lambda_\U^n\Omega_{\U/\k})^*=\Hom_\U(\Lambda_\U^n\Omega_{\U/\k},\U)$ to be the composition of the above map with the $\U$-module map $\U \mu\rightarrow \U$ which sends $\mu$ to $1$.

Consider another $\U$-module map $\psi:\Lambda_\U^n\Omega_{\U/\k}\rightarrow \U$.  Since $\U$ is torsion-free, $\psi$ factors through $f(\Lambda_\U^n\Omega_{\U/\k})$.  Localizing $\psi$ at a cluster $\x$ gives an $L_\x$-module map $\psi_\x:L_\x \mu\rightarrow L_\x$.  Let $s_\x:= \psi_\x(\mu)$, and note that $\psi_\x(a\mu) = as_\x$ for all $a\in L_\x$.

Localizing at a different cluster $\y$ gives a map $\psi_\y:L_\y\mu\rightarrow L_\y$, which is of the form $\psi_\y(a\mu) = as_\y$ for some $s_\y\in L_\y$.  Since there is some non-zero $b\in U$ such that $b\mu \in f(\Lambda_\U^n\Omega_{\U/\k})$ (the product of the variables in any cluster suffices),
\[ bs_\x = b\psi_\x(\mu) = \psi(b\mu) = b\psi_\y(\mu) = bs_\y,\]
it follows that $s_\x=s_\y$ in $L_\x\cap L_\y$.  Repeating for all clusters, $s_\x\in \U$.  Therefore, $\psi(a\mu) = as_\x= \mu^*(s_\x a\mu)$ for all $a\in \U$; this proves that $\mu^*$ freely generates the dual module
\[ (\Lambda_\U^n\Omega_{\U/\k})^* = \U \mu^* .\]
Dualizing both sides demonstrates that $\omega_{\U/\k}= \U \mu$.
\end{proof}
There are examples where the log volume forms are not in $\Omega^n_{\U/\k}$; hence, the double dual in the definition of $\omega_{\U/\k}$ is necessary.

\begin{Cor}
A Noetherian upper cluster algebra over a field is Gorenstein.
\end{Cor}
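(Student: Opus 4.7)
The plan is to derive the corollary essentially immediately from Theorem \ref{thm: loggen}. That theorem exhibits $\omega_{\U/\k}$ as a free $\U$-module of rank one. Since Gorenstein is a local property and the canonical module construction commutes with localization, every localization $\U_\mathfrak{p}$ will have canonical module $\omega_{\U_\mathfrak{p}/\k} = (\omega_{\U/\k})_\mathfrak{p}$ free of rank one over the Noetherian normal local ring $\U_\mathfrak{p}$ (normality of $\U$ passes to localizations, and was established already in Section 2). The standard characterization from duality theory --- a Noetherian normal local ring is Gorenstein whenever its canonical module is free of rank one --- then finishes the proof.

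The main (and essentially only) subtlety to address is ensuring that the ``algebro-geometric'' canonical module defined in the appendix as $(\Lambda_\U^n \Omega_{\U/\k})^{**}$ coincides with the ``commutative-algebra'' dualizing module appearing in the Gorenstein characterization. For a normal Noetherian domain essentially of finite type over a field this identification is classical, so I would invoke it without further fuss: the Noetherian hypothesis on $\U$ places us in a setting where a dualizing complex exists, and having a reflexive canonical module which is free of rank one forces this dualizing complex to be a shift of $\U$ itself, which is precisely the Gorenstein condition. I do not anticipate any genuine obstacle; all of the real work was done already in Theorem \ref{thm: loggen}, and the corollary simply packages that conclusion in the language of singularities.
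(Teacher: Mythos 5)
Your overall strategy---deduce the corollary directly from Theorem \ref{thm: loggen}---is the same one the paper intends (it states the corollary with no further argument), but as written your proof has a genuine gap: the ``standard characterization'' you invoke is not correct. For a Noetherian local ring, Gorenstein means Cohen--Macaulay \emph{and} dualizing module free of rank one; freeness of the canonical module alone only gives the quasi-Gorenstein property. A normal Noetherian local domain can be quasi-Gorenstein without being Cohen--Macaulay: for instance, the local ring at the vertex of the affine cone over an abelian surface (embedded by a projectively normal ample line bundle) is normal with $\omega_R\cong R$, yet $H^2_{\mathfrak m}(R)\neq 0$ while $\dim R=3$. Your fallback via dualizing complexes fails for the same reason: freeness of the top cohomology of the dualizing complex does not force the complex to be concentrated in a single degree---that concentration \emph{is} Cohen--Macaulayness, which is exactly the missing ingredient. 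Nothing in the paper supplies Cohen--Macaulayness for an arbitrary Noetherian upper cluster algebra (it is available only in the locally acyclic case, via Theorem \ref{AcyclicPres} and Proposition \ref{propLA}, or through strong $F$-regularity in characteristic $p$), so it cannot simply be cited.

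A secondary issue: identifying the appendix's module $(\Lambda^n_\U\Omega_{\U/\k})^{**}$ with the duality-theoretic dualizing module, as you do, requires $\U$ to be (essentially) of finite type over $\k$; ``Noetherian'' by itself does not guarantee this, so that identification also needs justification. What Theorem \ref{thm: loggen} gives immediately is that a Noetherian upper cluster algebra is quasi-Gorenstein (its canonical module is trivial); to upgrade this to ``Gorenstein'' you must in addition establish Cohen--Macaulayness, and your proposal does not do so.
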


\subsection{Canonical modules and Frobenius splittings}

We sketch the relation between canonical modules and Frobenius splittings here; further details may be found in \cite[Section 1.3]{BK05}.

Let $\k$ be a field of positive characteristic $p\neq2$, and let $X$ be a smooth, locally finite-type scheme over $\k$.  The Frobenius map becomes a flat, finite morphism
\[ f:X\rightarrow X. \]
The push-forward functor $f_*:Coh(X)\rightarrow Coh(X)$ then has a right-adjoint $f^!:Coh(X)\rightarrow Coh(X)$, together with an adjunction map
\[ tr:f_*f^!\rightarrow Id\]
called the \emph{trace}.

The coherent sheaf $f^!(\mathcal{O}_X)$ and its trace map can be connected with Frobenius splittings as follows.  On any open affine subscheme $\Spec(R)\subseteq X$, 
\begin{itemize}
	\item $f^!(\mathcal{O}_X)[\Spec(R)]$ is isomorphic to $\Hom_R(R^{1/p},R)$ as an $R^{1/p}$-module; equivalently, 
	to $\Hom_{R^p}(R,R^p)$ as an $R$-module, 
	\item $f_*f^!(\mathcal{O}_X)[\Spec(R)]$ is isomorphic to $\Hom_R(R^{1/p},R)$ as an $R$-module, and
	\item the trace map is given by the $R$-module map
	\[ \Hom_R(R^{1/p},R)\rightarrow R\]
	which sends a map $f:R^{1/p}\rightarrow R$ to $f(1)\in R$.
\end{itemize}
Hence, the sheaf of Frobenius splittings is isomorphic to $tr^{-1}(1)\subset f_*f^!(\mathcal{O}_X)$.  

Duality theory for the morphism $f$ gives an alternate description of $f^!(\mathcal{O}_X)$, in terms of the canonical sheaf $\omega_{X/\k}$.
\begin{Thm}\cite[Section 1.3]{BK05}\label{thm: duality}
There are natural isomorphisms\footnote{The negative exponent on $(\omega_{X/\k})^{1-p}$ denotes a positive power of the dual sheaf $\omega_{X/\k}^*$.  This is a markedly different use of exponential notation than $R^{1/p}$.}
\[ f^!(\mathcal{O}_X) \stackrel{\sim}{\longrightarrow} \mathcal{H}om_X(f^*\omega_{X/\k},\omega_{X/\k})\stackrel{\sim}{\longrightarrow} (\omega_{X/\k})^{1-p}, \]
\[ f_*f^!(\mathcal{O}_X) \stackrel{\sim}{\longrightarrow} \mathcal{H}om_X(\omega_{X/\k},f_*\omega_{X/\k})\stackrel{\sim}{\longrightarrow} f_*((\omega_{X/\k})^{1-p}), \]
and a map $\tau:f_*\omega_{X/\k}\rightarrow \omega_{X/\k}$, such that the trace map is given by
\[ f_*f^!(\mathcal{O}_X) \stackrel{\sim}{\longrightarrow} \mathcal{H}om_X(\omega_{X/\k},f_*\omega_{X/\k})\stackrel{\tau\circ -}{\longrightarrow} \mathcal{H}om_X(\omega_{X/\k},\omega_{X/\k}) \stackrel{\sim}{\longrightarrow} \mathcal{O}_X.\]
\end{Thm}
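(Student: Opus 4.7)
The statement is Grothendieck duality specialized to the Frobenius morphism between smooth $\k$-schemes, so the plan is to assemble the standard ingredients in order; the cited reference \cite[Section 1.3]{BK05} carries out exactly this program in detail.

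First I would set up the duality functor. Because $X$ is smooth over $\k$, Kunz's theorem guarantees that $f$ is flat, and together with our $F$-finiteness hypothesis $f$ is a finite flat morphism between smooth $\k$-schemes. Grothendieck duality for a finite morphism gives a right adjoint $f^!$ to $f_*$, and for a finite flat morphism between schemes that are smooth over a common base one has the clean formula $f^!(\mathcal{G}) \cong f^*\mathcal{G} \otimes \omega_f$, where $\omega_f := \omega_{X/\k} \otimes f^*(\omega_{X/\k})^{\vee}$ is the relative dualizing sheaf. Setting $\mathcal{G} = \mathcal{O}_X$ immediately yields
\[ f^!(\mathcal{O}_X) \;\cong\; \omega_{X/\k} \otimes f^*(\omega_{X/\k})^{\vee} \;\cong\; \mathcal{H}om_X(f^*\omega_{X/\k},\, \omega_{X/\k}), \]
which is the first isomorphism of the first line.

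Next I would identify $f^*\omega_{X/\k}$ with $\omega_{X/\k}^{\otimes p}$. For any line bundle $L$ on a scheme of characteristic $p$, the assignment $s \otimes a \mapsto a\, s^{\otimes p}$ defines a morphism $f^*L \to L^{\otimes p}$: well-definedness follows from the twisted relation $rs \otimes a = s \otimes r^p a$ (both sides mapping to $r^p a\, s^{\otimes p}$), and the map is an isomorphism on stalks since both sides are locally free of rank one. Applied to $L = \omega_{X/\k}$ this gives $\mathcal{H}om(\omega^{\otimes p}, \omega) \cong \omega^{1-p}$, finishing the first line. Applying $f_*$, combined with the projection formula and the identification $f_*\mathcal{H}om(f^*\omega,\omega) \cong \mathcal{H}om(\omega, f_*\omega)$ from adjunction, gives the second line.

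The main obstacle is verifying that the trace is computed by the map $\tau$ as described. Here $\tau: f_*\omega_{X/\k} \to \omega_{X/\k}$ is the classical Cartier operator, which in local \'etale coordinates $x_1,\ldots,x_n$ sends $x^{\alpha}\, dx_1\wedge\cdots\wedge dx_n$ to $x^{\beta}\, dx_1\wedge\cdots\wedge dx_n$ when each $\alpha_i = p\beta_i + (p-1)$, and to zero otherwise. The required compatibility is that, after transport through the chain of natural isomorphisms constructed above, the Grothendieck trace $f_*f^!\mathcal{O}_X \to \mathcal{O}_X$ coincides with post-composition by $\tau$ under the identification $\mathcal{H}om_X(\omega,f_*\omega) \to \mathcal{H}om_X(\omega,\omega) \cong \mathcal{O}_X$. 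This is a naturality check on the counit $f_*f^! \to \mathrm{Id}$, carried out by reducing to local coordinates where the Cartier operator is explicit and matching it against the residue-theoretic description of the counit. This last step is the heart of the argument; it is treated in \cite[Section 1.3]{BK05}, whose presentation I would follow.
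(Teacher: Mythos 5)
Your proposal is correct, but note that the paper offers no proof of this statement at all: it is quoted verbatim as background from \cite[Section 1.3]{BK05}, which is exactly the source you defer to for the key identification of the trace with composition by the Cartier operator $\tau$. Your sketch (duality for the finite flat Frobenius, the isomorphism $f^*\omega_{X/\k}\cong\omega_{X/\k}^{\otimes p}$, adjunction/projection formula for the pushed-forward line, and the local-coordinate Cartier computation) is precisely the argument of that reference, so it matches the intended justification.
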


\subsection{Back to cluster algebras}

Let $\U$ be an upper cluster algebra.  For any seed $(\Q,\x)$, the Laurent phenomenon says that freezing every mutable vertex gives the localization $\k[x_1^{\pm1},...,x_n^{\pm1}]$ in the cluster $\x=\{x_1,...,x_n\}$.  Geometrically, this means there is an open subscheme
\[ \text{Spec}(\k[x_1^{\pm1},...,x_n^{\pm1}]) \subseteq \text{Spec}(\U).\]
Let us call subschemes of this form \textbf{cluster tori}.
Let $X\subseteq \text{Spec}(\U)$ be the \emph{union of the cluster tori}, as $(\Q,\x)$ runs over all seeds.

While the scheme $\text{Spec}(\U)$ is generally neither smooth nor locally finite type over $\k$, the open subscheme $X$ is both.  Hence, by Theorem \ref{thm: duality}, we have isomorphisms
\[ f^!(\mathcal{O}_X) \stackrel{\sim}{\longrightarrow} \mathcal{H}om_X(f^*\omega_{X/\k},\omega_{X/\k})\stackrel{\sim}{\longrightarrow} (\omega_{X/\k})^{1-p} .\]

\begin{Prop}
Let $X$ be the union of the cluster tori in $\text{Spec}(\U)$.
\begin{enumerate}
	\item $\mathcal{O}_X(X)$ is isomorphic to $\U$ as a $\k$-algebra.
	\item $f^!(\mathcal{O}_X)(X)$ is isomorphic to $\Hom_{\U^p}(\U,\U^p)$ as a $\U$-module.
	\item $\omega_{X/\k}(X)$ is isomorphic to $\omega_{\U/\k}$ as a $\U$-module.
\end{enumerate}
On any cluster torus, these isomorphisms restrict to the obvious isomorphisms.
\end{Prop}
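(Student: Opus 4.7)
The plan is to exploit the fact that, by construction, $X$ is covered by the open affine subschemes $T_\x := \Spec(L_\x)$ indexed by clusters $\x$, where $L_\x = \k[x_1^{\pm1},\ldots,x_n^{\pm1}]$. For each of the three sheaves under consideration, there is a tautological identification of its sections over a single cluster torus $T_\x$ (this supplies the final ``On any cluster torus'' assertion). The sections over $X$ will then be computed by the sheaf axiom as the intersection, over all clusters, of the sections over individual cluster tori, taken inside a fixed ambient object at the generic point. The task is thus reduced to computing three intersections of objects attached to the $L_\x$.

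For (1), the sheaf axiom gives $\mathcal{O}_X(X) = \bigcap_\x L_\x$ inside $\mathcal{F}(\U)$, which is the definition of $\U$. For (2), the sections of $f^!(\mathcal{O}_X)$ over $T_\x$ are identified, via the Laurent-ring instance of Theorem \ref{thm: duality}, with $\Hom_{L_\x}(L_\x^{1/p}, L_\x) \cong \Hom_{L_\x^p}(L_\x, L_\x^p)$; by Lemma \ref{lemma: uniLaurent}, the former is a free rank-one $L_\x^{1/p}$-module with basis the standard splitting $\phi_\x$. The crucial input from the proof of Theorem \ref{thm:UisSplit} is that the $\phi_\x$ coincide as $\k$-linear maps after extension to the fraction field $\mathcal{F}$, so they may all be regarded as a single element $\phi \in \Hom_\mathcal{F}(\mathcal{F}^{1/p}, \mathcal{F})$. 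The sheaf axiom then yields
\[ f^!(\mathcal{O}_X)(X) = \bigcap_\x \bigl(L_\x^{1/p}\,\phi\bigr) = \Bigl(\bigcap_\x L_\x^{1/p}\Bigr)\phi = \U^{1/p}\phi, \]
and by Theorem \ref{Homupper}, $\U^{1/p}\phi = \Hom_\U(\U^{1/p},\U)$, which is naturally isomorphic to $\Hom_{\U^p}(\U,\U^p)$ as a $\U$-module.

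For (3), the argument is parallel. On each cluster torus $T_\x$, one has $\omega_{X/\k}(T_\x) = \omega_{L_\x/\k}$, which by Proposition \ref{prop: cangenLaurent} is freely generated by the log volume form $\mu_\x \in \omega_{\mathcal{F}(\U)/\k}$. The proposition that $\mu_\x = \pm\mu_\y$ for any two clusters ensures that all log volume forms determine the same line $\mu$ in the generic canonical module, so the sheaf axiom yields
\[ \omega_{X/\k}(X) = \bigcap_\x \bigl(L_\x\,\mu\bigr) = \Bigl(\bigcap_\x L_\x\Bigr)\mu = \U\,\mu = \omega_{\U/\k}, \]
where the last equality is Theorem \ref{thm: loggen}. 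The main obstacle throughout is verifying that the various gluing identifications — the cluster-independence of $\phi$ on $\mathcal{F}$ and of $\mu$ up to sign on $\omega_{\mathcal{F}/\k}$ — genuinely place all the relevant objects inside a single ambient module at the generic point, so that the intersections make sense term-by-term; once this is granted, each clause reduces directly to a theorem already established in the paper.
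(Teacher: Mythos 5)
Your proposal is correct and follows essentially the same route as the paper: global sections over $X$ are computed from the cover by cluster tori, on each of which the sheaf is free on the restriction of the distinguished generator (the standard splitting, resp.\ a log volume form, both independent of the cluster at the generic point), so a global section is that generator times a function Laurent in every cluster, and Theorems \ref{Homupper} and \ref{thm: loggen} then give the identifications with $\Hom_{\U^p}(\U,\U^p)$ and $\omega_{\U/\k}$; your derivation of part (1) from the sheaf axiom is exactly the standard fact the paper cites from \cite{MM13}.
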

\begin{proof}
The first isomorphism is a standard fact about upper cluster algebras; see e.g. \cite[Prop. 3.4]{MM13}.  The other two isomorphisms follow from Theorems \ref{Homupper} and \ref{thm: loggen}, which show that $Hom_{\U^p}(\U,\U^p)$ and $\omega_{\U/\k}$ are each free of rank 1 over $\U$ with a distinguished generator (the cluster splitting and either log volume form, respectively).  On each cluster torus, the sheaves $f^!(\mathcal{O}_X)$ and $\omega_{X/\k}$ are free and generated by the restriction of the generator.  Hence, a global section of $f^!(\mathcal{O}_X)$ or $\omega_{X/\k}$ can be written as the distinguished generator, times a rational function which is Laurent in each cluster; that is, an element of $\U$.  
\end{proof}

As a consequence, we have an isomorphism of $\U$-modules
\[ \Hom_{\U^p}(\U,\U^p) \stackrel{\sim}{\longrightarrow} (\omega_{\U/\k})^{1-p},\]
where $\omega_{\U/\k}$ to a negative power means $\omega_{\U/\k}^*=(\Lambda^n_\U\Omega_{\U/\k})^*$ to a positive power.

The connection between cluster splittings and log volume forms starts to become clear.  Theorem \ref{thm: loggen} establishes that $\omega_{\U/\k}$ is free of rank 1 as a $\U$-module.  Hence, $\Hom_{\U}(\U,\U^p)$ is free of rank 1 as a $\U$-module, or equivalently, $\Hom_{\U}(\U^{1/p},\U)$ is free of rank 1 as a $\U^{1/p}$-module.  

To choose a distinguished generator, we observe that $\omega_{\U/\k}$ has two natural generators (the log volume forms) which differ by a sign.  Since $p$ is odd, then the $(1-p)$-th power of the two log volume forms coincide, so $(\omega_{\U/\k})^{1-p}$ has a canonical generator.  This determines a canonical generator in $\Hom_{\U}(\U^{1/p},\U)$ over $\U^{1/p}$; all that remains is to observe that it coincides with the cluster splitting.
\begin{Prop}
If $\mu\in \omega_{\U/\k}$ is either log volume form, then the image of $\mu^{1-p}$ under the map
\[ (\omega_{\U/\k})^{1-p}\stackrel{\sim}{\longrightarrow} \Hom_{\U}(\U^{1/p},\U)\]
is the cluster spitting $\phi:\U^{1/p}\rightarrow \U$.  
\end{Prop}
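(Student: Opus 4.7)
The plan is to localize to a single cluster torus, where Grothendieck duality becomes completely explicit via the Cartier operator, and verify the identification there.

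\emph{Reduction to a cluster torus.} By Theorem \ref{thm: loggen} and Theorem \ref{Homupper}, both $(\omega_{\U/\k})^{1-p}$ and $\Hom_\U(\U^{1/p},\U)$ are free of rank one as $\U^{1/p}$-modules, with distinguished generators $\mu^{1-p}$ and $\phi$ respectively. Hence the duality isomorphism must send $\mu^{1-p}$ to $v^{1/p}\phi$ for some unit $v\in \U^\times$, and the task reduces to showing $v=1$. Because $\U=\bigcap_\x L_\x$ and both modules are torsion-free over $\U^{1/p}$, this unit can be detected after localizing at any single cluster, so I would fix a cluster $\x=\{x_1,\ldots,x_n\}$ and reduce to the corresponding statement over the Laurent ring $L_\x$.

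\emph{Local computation.} On $\Spec L_\x$ the coordinates $x_1,\ldots,x_n$ form a $p$-basis, and $\{x^{\mathbf{r}/p}:0\leq r_i<p\}$ is the standard $L_\x$-basis of $L_\x^{1/p}$; in this basis, $\phi_\x$ is the $L_\x$-linear projection onto the $\mathbf{r}=\mathbf{0}$ summand. By Theorem \ref{thm: duality}, the duality isomorphism on $L_\x$ factors through the Cartier operator $C:F_*\omega_{L_\x/\k}\to\omega_{L_\x/\k}$: unwinding the three isomorphisms in that theorem, the $L_\x$-linear map $L_\x^{1/p}\to L_\x$ associated to $\mu_\x^{1-p}$ is $\psi(g^{1/p})=C(g\cdot\mu_\x)/\mu_\x$ for $g\in L_\x$. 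The Cartier operator on the logarithmic top form is explicit: since $\mu_\x$ is translation-invariant on the torus, the formula $C(x^\mathbf{a}\mu_\x)=x^{\mathbf{a}/p}\mu_\x$ when $\mathbf{a}\in p\mathbb{Z}^n$ and $C(x^\mathbf{a}\mu_\x)=0$ otherwise follows from $C(\mu_\x)=\mu_\x$ together with the semilinearity $C(f^p\eta)=f\,C(\eta)$. Feeding $g=x^\mathbf{r}$ with $0\leq r_i<p$ into $\psi$ then gives $\psi(x^{\mathbf{r}/p})=\delta_{\mathbf{r},\mathbf{0}}$, which is precisely the defining behavior of the standard splitting $\phi_\x$. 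So the duality image of $\mu_\x^{1-p}$ is $\phi_\x$, and $v=1$ in $L_\x$.

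\emph{Main obstacle.} The technical heart of the proof is pinning down the precise formula for the Cartier operator on logarithmic top forms of a Laurent ring --- equivalently, checking how Grothendieck duality converts the distinguished generator $\mu_\x^{1-p}$ into a concrete $L_\x$-linear map on $L_\x^{1/p}$. An alternative route is to invoke equivariance: $\mu_\x$ is, up to sign, the unique torus-invariant top form on $\Spec L_\x$, and $\phi_\x$ is the unique torus-invariant generator of $\Hom_{L_\x}(L_\x^{1/p},L_\x)$, so both sides of the duality can be matched after rescaling by a single constant in $\k^\times$, fixed by evaluating at $1\in L_\x^{1/p}$. Once $v=1$ holds on a single cluster torus, torsion-freeness from the first paragraph promotes this to $v=1$ in $\U$, and hence the duality image of $\mu^{1-p}$ equals the cluster splitting $\phi$ globally.
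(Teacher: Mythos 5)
Your proposal is correct and follows essentially the same route as the paper: the paper likewise reduces to a single cluster torus via the localization maps on $(\omega_{\U/\k})^{1-p}$ and $\Hom_\U(\U^{1/p},\U)$ and then identifies the image of $\mu_\x^{1-p}$ with the standard splitting of $L_\x$, citing \cite[Lemma 1.3.6]{BK05} for exactly the local Cartier-operator computation you carry out by hand. One small point: the vanishing $C(x^{\mathbf a}\mu_\x)=0$ for $\mathbf a\notin p\mathbb{Z}^n$ does not follow from semilinearity and $C(\mu_\x)=\mu_\x$ alone; you also need that the Cartier operator annihilates exact forms (note $x^{\mathbf a}\mu_\x$ is exact when some $a_i\not\equiv 0 \bmod p$), or simply the explicit description of $C$ on the Laurent ring.
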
 
\noindent The reader is cautioned that, as written, this is not a module map; rather, it intertwines the $\U$-action on $(\omega_{\U/\k})^{1-p}$ and the $\U^{1/p}$-action on $\Hom_\U(\U^{1/p},\U)$.
\begin{proof}
If $L_\x$ is the $\k$-Laurent ring in some cluster $\x$, the localization map $\U\subset L_\x$ induces localization maps
\[ (\omega_{\U/\k}) ^{1-p}\hookrightarrow (\omega_{L_\x/\k})^{1-p},\;\;\; and \ \Hom_\U(\U^{1/p},\U) \hookrightarrow \Hom_{L_\x}(L_\x^{1/p},L_\x).\]
It suffices to check that $\mu^{1-p}$ is sent to the standard splitting of $L_\x$; this is essentially \cite[Lemma 1.3.6]{BK05}.
\end{proof}


%
%
%
%

\bibliography{UniversalBib}
\bibliographystyle{amsalpha}

\end{document}